\documentclass[11pt]{article}

\usepackage{fullpage}
\usepackage{amsfonts,amssymb,stmaryrd,amsthm,amsmath}
\usepackage{frcursive}

\usepackage{graphicx,tikz,tikz-cd, tabularx}
\usetikzlibrary{matrix,arrows,decorations.markings}

\newcommand\longto{{\longrightarrow}}

\newcommand\RR{\mathbb{R}}\newcommand\PP{\mathbb P}
\newcommand\CC{\mathbb{C}}
\newcommand\NN{\mathbb{N}}\newcommand\ZZ{\mathbb{Z}}

\newcommand\inv{{^{-1}}}

\newcommand{\ts}{\tilde \sigma}
\newcommand\hy{{\widehat y}}\newcommand\halpha{{\widehat \alpha}}
\newcommand\hbeta{{\widehat \beta}}
\newcommand\hs{{\widehat s}}
\newcommand\hPhi{{\widehat\Phi}}
\newcommand\hw{{\widehat w}}\newcommand\hW{{\widehat W}}
\newcommand\hnu{{\widehat\nu}}
\newcommand\hGB{{\widehat G/\widehat B}}
\newcommand\hV{{\widehat V}}
\newcommand\hG{{\widehat G}} \newcommand\hB{{\widehat B}}\newcommand\hT{{\widehat
    T}} 

\newcommand\Lie{{\operatorname{Lie}}}
\newcommand\Mult{{\operatorname{Mult}}}
\newcommand\SL{\operatorname{SL}}\newcommand\Sp{\operatorname{Sp}}\newcommand\PSL{\operatorname{PSL}}
\newcommand\SO{\operatorname{SO}}\newcommand\Spin{\operatorname{Spin}}

\newcommand\Li{{\mathcal
    L}}\newcommand\Sc{{\mathcal S}}\newcommand\Dc{{\mathcal D}}
\newcommand\Div{{\mathcal D}}
\newcommand\Orb{{\mathcal O}}
\newcommand\Hom{{\operatorname{Hom}}}
\newcommand\GL{{\operatorname{GL}}}

\renewcommand\sl{{\mathfrak{sl}}}

\newcommand{\ema}{\epsilon_{-\ha}}
\newcommand{\eb}{{\epsilon_{\beta}}}
\newcommand{\emb}{\epsilon_{- \beta}}

\newcommand{\ea}{\epsilon_{\ha}}

\renewcommand{\lg}{{\mathfrak g}}\newcommand{\hlg}{{\widehat{\mathfrak g}}}
\newcommand{\lh}{{\mathfrak h}}

\newcommand{\Pha}{P_{\widehat \alpha}}

\newcommand{\ha}{{\widehat \alpha}}
\newcommand{\hb}{\widehat \beta}

\newcommand{\Hc}{\mathcal H}


\usepackage[utf8]{inputenc}  

\usepackage[T1]{fontenc}

\usepackage{hyperref}
\hypersetup{pdfauthor={Name}}

\newtheorem{prop}{Proposition}
\newtheorem{theo}[prop]{Theorem}
\newtheorem{lemma}[prop]{Lemma}

\theoremstyle{remark}
\newtheorem*{remark}{Remark}

\newenvironment{defi}{\noindent{\bf Definition.}}{~\\}
\setlength{\fboxrule}{2pt}

\newcommand\DynkinNodeSize{2mm}
\newcommand\DynkinArrowLength{3mm}
\tikzset{
  dnode/.style={
    circle,
    inner sep=0pt,
    minimum size=\DynkinNodeSize,
    fill=white,
    draw},
  middlearrow/.style={
    decoration={markings,
      mark=at position 0.6 with
      {\draw (0:0mm) -- +(+135:\DynkinArrowLength); \draw (0:0mm) -- +(-135:\DynkinArrowLength);},
    },
    postaction={decorate}
  },
  leftrightarrow/.style={
    decoration={markings,
      mark=at position 0.999 with
      {
      \draw (0:0mm) -- +(+135:\DynkinArrowLength); \draw (0:0mm) -- +(-135:\DynkinArrowLength);
      },
      mark=at position 0.001 with
      {
      \draw (0:0mm) -- +(+45:\DynkinArrowLength); \draw (0:0mm) -- +(-45:\DynkinArrowLength);
      },
    },
    postaction={decorate}
  },
  sedge/.style={
  },
  dedge/.style={
    middlearrow,
    double distance=0.5mm,
  },
  tedge/.style={
    middlearrow,
    double distance=1.0mm+\pgflinewidth,
    postaction={draw}, 
  },
  infedge/.style={
    leftrightarrow,
    double distance=0.5mm,
  }
}

\newcommand\revddots{\mathinner{\mkern1mu\raise\p@\vbox{\kern7\p@\hbox{.}}\mkern2mu\raise4\p@\hbox{.}\mkern2mu\raise7\p@\hbox{.}\mkern1mu}}
\makeatletter
\def\revddots{\mathinner{\mkern1mu\raise\p@\vbox{\kern7\p@\hbox{.}}\mkern2mu\raise4\p@\hbox{.}\mkern2mu\raise7\p@\hbox{.}\mkern1mu}}
\makeatother

\newcommand\base{{\mathcal B}}
\newcommand\diag{{\operatorname{diag}}}
\newcommand{\scal}[1]{\langle #1 \rangle}

\begin{document}
\title{On the multiplicity spaces for branching to a spherical
  subgroup of minimal rank}
\author{Luca Francone and Nicolas Ressayre}

\maketitle

\begin{abstract}
Let $\hlg$ be a complex semi-simple 
Lie algebra and $\lg$ be a
semisimple subalgebra of $\hlg$. Consider the branching problem of
decomposing the simple $\hlg$-representations $\hV$ as a sum of simple
$\lg$-representations $V$. 
When $\hlg=\lg\times\lg$, it is the tensor product decomposition. 
The multiplicity space $\Mult(V,\hV)$
satisfies
$$
\hV=\oplus_V \Mult(V,\hV)\otimes V,
$$ 
 where the sum runs over the isomorphism classes of simple
 $\lg$-representations.
In the case when $\lg$ is spherical of minimal rank, we describe
$\Mult(V,\hV)$ as the intersection of kernels of powers of root operators in
some weight space of the dual space $V^*$ of $V$.
When $\hlg=\lg\times\lg$, we recover by geometric methods a well known result.
\end{abstract}

\section{Introduction}

Let $G$ be a connected reductive subgroup of a complex semisimple group $\widehat{G}$.
The branching problem consists in 
  decomposing irreducible representations of $\widehat G$ as sum of
  irreducible $G$-representations.

Fix maximal tori $T\subset\hT$ and Borel subgroups $B\supset T$ and
$\hB\supset \hT$ of $G$ and $\hG$ respectively.
Let $X(T)$ denote the group of characters of $T$ and let $X(T)^+$
denote the set of dominant characters.
For $\nu\in X(T)^+$, $V_\nu$ denotes the
irreducible representation of highest weight $\nu$.
Similarly, we use notation $X(\widehat T)$,  $X(\widehat T)^+$,  $V_{\widehat\nu}$
relatively to $\widehat G$. 
For any $G$-representation $V$, the subspace of
 $G$-fixed vectors is denoted by $V^G$.
Given $\nu\in X(T)^+$ and $\widehat\nu\in X(\widehat T)^+$, set
\begin{eqnarray}
  \label{eq:defc}
 \Mult(\nu,\hnu)=\Hom(V_\nu,V_\hnu^*)^G=(V_\nu^*\otimes V_{\widehat\nu}^*)^G,
\end{eqnarray}
where $V_\nu^*$ and $V_\hnu^*$ denote the dual representations of
$V_\nu$ and $V_\hnu$ respectively.
The branching problem is equivalent to the knowledge of these
spaces. Indeed, there is a natural $G$-equivariant isomorphism:
$$
\begin{array}{ccl}
  \bigoplus_{\nu\in X(T)^+} \Hom(V_\nu,V_\hnu^*)^G\otimes
  V_\nu&\longto&V_\hnu^*\\
f\otimes v&\longmapsto&f(v).
\end{array}
$$

Let $\hGB$ denote the complete flag variety of $\hG$.  
In this article, we are interested in the case when the pair $(\hG,G)$
is  {\it spherical of minimal rank}. In other words, we assume that 
there exists $x$ in $\hGB$ such that the orbit $G.x$ of $x$ 
is open in $\hGB$ and the stabilizer $G_x$ of $x$ in $G$ contains a maximal 
torus of $G$. An important example is when $\hG=G\times G$ in which $G$
is diagonally embedded. Then the point $x=(B,B^-)\in\hGB$ works, where
$B^-$ denotes the opposite Borel subgroup of $B$ containing $T$.
More generally, the   spherical pairs of minimal rank have been
classified by the second author in \cite{spherangmin}. The complete
list, assuming in addition that  $\hG$ is semisimple simply connected,
$G$ is 
simple and $G\neq\hG$ is:
\begin{enumerate}
\item \label{list:tensor} $G$ is simple, simply connected and diagonally embedded in $G=G\times G$;
\item \label{list:slsp}
$(\SL_{2n},\Sp_{2n})$ with $n\geq 2$;
\item \label{list:spin}
$(\Spin_{2n},\Spin_{2n-1})$ with $n\geq 4$;

\item \label{list:G2}
$(\Spin_7,G_2)$;
\item \label{list:F4}
$(E_6,F_4)$.
\end{enumerate}

Our aim is to present a uniform  description of the multiplicity spaces
$\Mult(\nu,\hnu)$ for given $(\hG,G)$ in this list. 
Let us first fix some notation. 
Recall that $T\subset\hT$ and denote by $\rho\,:\,X(\hT)\longto X(T)$
the restriction map.
Let $\Delta$ (resp. $\widehat\Delta$) denote the set of simple roots of
$G$ (resp. $\hG$). For each
positive root $\alpha$ of $G$, we fix an $\sl_2$-triple
$(X_\alpha,H_\alpha,X_{-\alpha})$ such that $H_\alpha$ (resp. $X_\alpha$)
belongs to the Lie algebra of $T$ (resp. $B$).
Given $\mu\in X(T)$, we denote by $V_\nu(\mu)=\{v\in V_\nu\,:\,\forall
t\in T\quad tv=\mu(t)v\}$ the corresponding multiplicity space for the
action of the maximal torus $T$.

Let $\hW$ denote the Weyl group of $\hG$.
Fix also $\hy_0\in\hW$ such that $G\hy_0\hB/\hB$ is dense in $\hGB$
and such that $\hy_0$ has minimal length with this
property
(see Section~\ref{sec:choicey0} for details). 
 
For cases \ref{list:slsp} to \ref{list:F4}, we also denote by  $\Phi^1$
the set of long roots of $G$.
In case \ref{list:tensor}, we define $\Phi^1$ to be the empty set.
Set
$$
\Div=\{\ha\in\widehat\Delta\,:\, \rho(\hy_0\ha)\not\in \Phi^1\}.
$$

\begin{theo}
  \label{th:mainintro}
For $\nu\in X(T)^+$ and $\widehat\nu\in X(\widehat T)^+$, there is a natural
isomorphism from $\Mult(\nu,\hnu)$ onto the subspace of
$V_\nu^*(\rho(\hy_0\hnu))$ consisting in the vectors $v$ such that
\begin{enumerate}
\item $X_\alpha\cdot v=0$, for any $\alpha\in \Phi^+\cap\Phi^1$;
\item\label{cond:pole} $X_{-\rho(\hy_0\ha)}^m\cdot v= 0\qquad \forall \, 
  m>\scal{\widehat\nu,\widehat\alpha^\vee}$, for any $\widehat\alpha\in \Div$.
\end{enumerate}
\end{theo}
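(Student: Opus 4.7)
The plan is to carry out a Borel--Weil-type argument on $\hGB$. Realize $V_\hnu^* \simeq H^0(\hGB, \Li_\hnu)$ for a suitable $\hG$-equivariant line bundle $\Li_\hnu$, so that $\Mult(\nu,\hnu)=\Hom_G(V_\nu, V_\hnu^*)$ becomes the space of $G$-equivariant maps from $V_\nu$ into global sections of $\Li_\hnu$. Set $H = G \cap \hy_0 \hB \hy_0\inv$, the stabilizer in $G$ of $\hy_0\hB$, so that the open $G$-orbit is $G/H$. Define the natural map $\Mult(\nu,\hnu) \to V_\nu^*$ by fixing a trivialization of the fiber $(\Li_\hnu)_{\hy_0 \hB}$ and sending $\phi$ to the functional $\tilde\phi \colon v \mapsto \phi(v)(\hy_0\hB)$. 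Since $T \subset H$ fixes $\hy_0\hB$ and acts on the fiber by a character whose restriction to $T$ matches $\rho(\hy_0\hnu)$ (up to the sign of the Borel--Weil convention), one gets $\tilde\phi \in V_\nu^*(\rho(\hy_0 \hnu))$. Injectivity follows at once from $G$-equivariance and the density of $G\hy_0 \hB$ in $\hGB$: if $\tilde\phi=0$, then every $\phi(v)$ vanishes on the open orbit, hence identically.

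The core of the argument is the characterisation of the image, which splits into a condition on the open orbit and a condition for regular extension across the boundary. The classification in \cite{spherangmin} describes $H$ as having Levi decomposition $H = T \cdot U_H$, where $U_H$ is the unipotent subgroup generated by the root subgroups $U_\alpha$ for $\alpha \in \Phi^+ \cap \Phi^1$ (so $U_H = \{e\}$ in case~\ref{list:tensor}). A $G$-equivariant section of $\Li_\hnu$ over the open orbit $G/H$ is determined by its value at $\hy_0\hB$ subject to $H$-semi-invariance: the $T$-part is precisely the weight condition, and differentiating the $U_H$-invariance yields condition~(1), $X_\alpha \cdot \tilde\phi = 0$ for $\alpha \in \Phi^+\cap \Phi^1$.

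For condition~(2), one analyses when a $G$-equivariant section on $G/H$ extends to $\hGB$. The classification identifies the codimension one $G$-orbits in $\hGB \setminus G\hy_0\hB$ as being in natural bijection with $\Div$. For $\ha \in \Div$, consider the projection $\pi_\ha \colon \hGB \to \hG/\widehat{P}_\ha$ and the fiber $F_\ha \simeq \PP^1$ through $\hy_0\hB$, parametrized by $t \mapsto \exp(tX_{-\rho(\hy_0\ha)})\cdot \hy_0\hB$. Since $\Li_\hnu|_{F_\ha} \cong \mathcal{O}_{\PP^1}(\scal{\hnu,\ha^\vee})$, $G$-equivariance gives the Taylor expansion $\phi(v)\bigl(\exp(tX_{-\rho(\hy_0\ha)})\hy_0\hB\bigr) = \sum_m \frac{t^m}{m!}\bigl(X_{-\rho(\hy_0\ha)}^m\tilde\phi\bigr)(v)$ along $F_\ha$ at $t=0$, and regularity at $t=\infty$ forces the right-hand side to be a polynomial of degree at most $\scal{\hnu,\ha^\vee}$, which is condition~(2).

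The main obstacle is surjectivity: showing that any $\tilde\phi$ satisfying~(1) and~(2) actually comes from a global section. These conditions ensure respectively that the datum descends to $G/H$ and that its restriction to each fiber $F_\ha$ extends to a section of $\mathcal{O}_{\PP^1}(\scal{\hnu,\ha^\vee})$. To conclude, one must promote these fiberwise bounds to regularity along the entire codimension one boundary divisors, and then appeal to the smoothness of $\hGB$ (Hartogs) to extend across the remaining locus of codimension at least two. The delicate point is therefore establishing the bijection $\Div \leftrightarrow \{\text{codimension one $G$-orbits}\}$ and the fact that the single $\PP^1$-constraint along $F_\ha$ governs the entire corresponding divisor; a clean uniform proof will likely rest on the structure theory of spherical/wonderful varieties of minimal rank, with case-by-case verification along the list~\ref{list:tensor}--\ref{list:F4} as backup.
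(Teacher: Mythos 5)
Your overall strategy is essentially the paper's, up to a cosmetic repackaging: you work with $\Hom_G(V_\nu,H^0(\hGB,\Li_\hnu))$ and evaluate at $\hy_0\hB$, whereas the paper realizes $\Mult(\nu,\hnu)$ as $H^0(G/B\times\hGB,\Li_\nu\otimes\Li_\hnu)^G$ and restricts to $G/B\times G\hy_0\hB/\hB$. Both restrict to the open orbit, both translate $H_0$-semi-invariance into the weight space and condition~(1), and both detect the pole order along a boundary divisor by restricting to a $\PP^1$-fiber $F_\ha=\pi_\ha\inv(\pi_\ha(\hy_0\hB))$ and using $G$-equivariance plus the $\SL_2$-computation. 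Your Taylor-series/\,$\mathcal{O}_{\PP^1}(\scal{\hnu,\ha^\vee})$ argument is correct in principle, though you would still need the compatible $\sl_2$-triples of Lemma~\ref{sltriples} to make the trivialization at the point of $D_\ha$ explicit enough to read off the exact threshold $\scal{\hnu,\ha^\vee}$ (the paper does this in Proposition~\ref{vanishneg}).

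The genuine gap is the one you flag yourself, and it should not be waved away as ``case-by-case backup.'' Regularity of $\ts$ along the \emph{single} curve $F_\ha$ is a priori weaker than absence of a pole along the irreducible divisor $D_\ha$; to conclude you need to know that $F_\ha$ meets $D_\ha$ transversally at a point of its \emph{dense} $G$-orbit, and that the order of the pole is constant along that orbit. The paper supplies precisely this with a uniform argument: Lemma~\ref{lem:carddiv} identifies the codimension-one orbits with $\Dc$ and computes their stabilizers, and Proposition~\ref{charts} builds an explicit $G$-equivariant open immersion $f_\ha: U^-\times\prod_\gamma U_\gamma\times U_{-\ha}\hookrightarrow\hGB$ whose image contains a dense open part of $D_\ha$ and in which $U_{-\ha}$ is the transverse direction, with $f_\ha\inv(D_\ha)=\{u_{-\ha}=1\}$. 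By $G$-invariance of $\ts$, regularity of the pullback $f_\ha^*\ts$ is then equivalent to regularity of the restriction to the single $U_{-\ha}$-slice through $\hy_0 s_\ha\hB$, which is exactly your $\PP^1$-computation. Finally, since $\bigcup_\ha(G/B\times V_\ha)$ has complement of codimension $\geq 2$ and $G/B\times\hGB$ is normal, Hartogs closes the argument. None of this requires inspecting the list \ref{list:tensor}--\ref{list:F4}: it rests on the structural Lemmas~\ref{lem:Lieisoty0}, \ref{lem:b+b-}, and \ref{lem:carddiv}, which hold uniformly for spherical pairs of minimal rank. You should add these transversality/density inputs to turn your sketch into a proof.
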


\bigskip
Actually, the conditions~\ref{cond:pole} are not pairwise
independant. For example, in the case of the tensor product, the
conditions associated to $(\alpha,0)\in \Div$ and $(0,-w_0\alpha)\in \Div$ are
equivalent for any simple root $\alpha$. 
Here $w_0$ denotes the longest element of the Weyl group.  
For each example, we describe in Section~\ref{sec:expl} an explicit
subset of
$\Div$ giving an irredundant set of inequalities. 

In the case of the tensor product, Theorem~\ref{th:mainintro} is well
known. See \cite[Theorem 2.1, p. 392]{PRV} or \cite[Theorem~5, p.~384]{Zelo}.
The usual proofs are algebraic, based on properties of the enveloping
algebra. 
Our geometric proof seems to be new even in this case. 

\bigskip
The branching rule for $(\Spin_{2n},\Spin_{2n-1})$ is multiplicity
free and hence easy to determine (see e.g. \cite{book:FH}).  
That of $(\SL_{2n},\Sp_{2n})$ has been the subject of much
attention in the literature. The first positive rule in terms of dominos was obtained
by Sundaram \cite{Sunda}. Naito-Sagaki conjectured a rule in terms of Littelmann's
patches \cite{NaitoSagaki}. Later, B.~Schumann and J.~Torres proved
this conjecture by obtaining a bijection with Sundaram's model.
A nonpositive rule for $(\Spin_7,G_2)$ were obtained by McGovern in \cite{Mcg:G2}.
Hopefuly, Theorem~\ref{th:mainintro} could be the first step toward
combinatorial rules for these branching problems.

\section{Reminder and complements on spherical homogeneous spaces of
  minimal rank}

\subsection{Roots of $G$ and $\hG$}

Fix a spherical pair of minimal rank $(G,\hG)$ with $G$ and $\hG$ reductive.
Choose a maximal torus $T$ in $G$ and a Borel subgroup $T\subset
B\subset G$. 
Denote by $\Phi$ (resp. $\Phi^+$) the set of roots (resp. positive
roots). 
Recall that $\Delta$ denotes the set of simple roots.
Fix also a maximal torus $\hT$ of $\hG$ containing $T$ and let
$\rho\,:\,X(\hT)\longto X(T)$ denote the restriction map.
The set $\hPhi$ of roots of $\hG$ maps onto $\Phi$ (see
\cite[Lemma 4.2]{spherangmin}). 
Let $\bar\rho$ denote the restriction of $\rho$ to $\hPhi$.
By setting $\hPhi^+=\bar\rho\inv(\Phi^+)$, one gets a
choice of positive roots for $\hG$. Let $\hB$ denote the corresponding
Borel subgroup. 
Then $\hB$ contains $B$.
By \cite[Lemma~4.6]{spherangmin}, $\rho(\widehat\Delta)=\Delta$, where
$\widehat\Delta$ denote the set of simple roots of $\hG$. 

On the following diagrams the restriction of $\bar\rho$ to the set of
simple roots is the vertical projection.

\begin{center}
  \begin{tabular}{|c|c|c|}
    \hline
    \begin{tikzpicture}[scale=.9,baseline=(2.center)]
      \draw (0,0) node[anchor=west] {$A_{2n-1}$};
      \node (vide) at (1,1) {};
      \node[dnode] (2) at (2,0) {}; \node[dnode] (3) at (3,-0.7) {};
      \node[dnode] (4) at (3,0.7) {}; \node[dnode] (5) at (4,-0.7) {};
      \node[dnode] (6) at (4,0.7) {}; \node[dnode] (7) at (5,-0.7) {};
      \node[dnode] (8) at (5,0.7) {}; \node[dnode] (9) at (6,-0.7) {};
      \node[dnode] (0) at (6,0.7) {};

      \path (2) edge[sedge] (3) edge[sedge] (4); \path (5) edge[sedge]
      (3) edge[sedge] (7);\path (6) edge[sedge] (4) edge[sedge] (8);
      \path (7) edge[sedge] (9); \path (8) edge[sedge] (0);


      \draw (0,-1.7) node[anchor=west] {$C_n$};
      \node[dnode] (3) at (2,-1.7) {}; \node[dnode] (4) at (3,-1.7)
      {}; \node[dnode] (5) at (4,-1.7) {}; \node[dnode] (6) at
      (5,-1.7) {}; \node[dnode] (7) at (6,-1.7) {};

   
      \path (3) edge[dedge] (4); \path (4) edge[sedge] (5); \path (6)
       edge[sedge] (5) edge[sedge] (7);
     \end{tikzpicture}&
                        \begin{tikzpicture}[scale=.9,baseline=(1.center)]
                          \draw (-0.5,0) node[anchor=west] {$D_n$};
                          \node[dnode] (1) at (1,0) {}; \node[dnode]
                          (2) at (2,0) {}; \node[dnode] (3) at (3,0)
                          {}; \node[dnode] (4) at (4,0) {};
                          \node[dnode] (5) at (5,-0.7) {};
                          \node[dnode] (6) at (5,0.7) {};

                          \path (1) edge[sedge] (2); \path (3)
                           edge[sedge] (2) edge[sedge] (4); \path (4)
                            edge[sedge] (5) edge[sedge] (6);


                            \draw (-0.5,-1.7) node[anchor=west]
                            {$B_{n-1}$}; \node[dnode] (2) at (1,-1.7)
                            {}; \node[dnode] (3) at (2,-1.7) {};
                            \node[dnode] (4) at (3,-1.7) {};
                            \node[dnode] (5) at (4,-1.7) {};
                            \node[dnode] (6) at (5,-1.7) {};

                            \path (2) edge[sedge] (3); \path (3)
                             edge[sedge] (4); \path (5) edge[dedge]
                             (6); \path (4) edge[sedge] (5);
                           \end{tikzpicture}

    \\
    \hline
    \begin{tikzpicture}[scale=.9,baseline=(1.center)]
      \node (vide) at (1,1) {};
      \draw (-0.5,0) node[anchor=west] {$E_6$};
      \node[dnode] (1) at (1,0) {}; \node[dnode] (2) at (2,0) {};
      \node[dnode] (3) at (3,-0.7) {}; \node[dnode] (4) at (3,0.7) {};
      \node[dnode] (5) at (4,-0.7) {}; \node[dnode] (6) at (4,0.7) {};

      \path (1) edge[sedge] (2); \path (2) edge[sedge] (3) edge[sedge]
      (4); \path (5) edge[sedge] (3);\path (6) edge[sedge] (4);


      \draw (-0.5,-1.7) node[anchor=west] {$\tilde{F}_4$};
      \node[dnode] (2) at (1,-1.7) {}; \node[dnode] (3) at (2,-1.7)
      {}; \node[dnode] (4) at (3,-1.7) {}; \node[dnode] (5) at
      (4,-1.7) {};
    
      \path (2) edge[sedge] (3);
   
      \path (3) edge[dedge] (4); \path (4) edge[sedge] (5);
    \end{tikzpicture}&
                       \begin{tikzpicture} [scale=.9,baseline=(1.center)]
                         \draw (0.5,0) node[anchor=west] {$B_3$};
                         \node[dnode] (2) at (2,0) {}; \node[dnode]
                         (3) at (3,-0.7) {}; \node[dnode] (4) at
                         (3,0.7) {}; \path (2) edge[dedge] (3)
                         edge[sedge] (4);

                         \draw (0.5,-1.7) node[anchor=west] {$G_2$};
                         \node[dnode] (2) at (2,-1.7) {}; \node[dnode]
                         (3) at (3,-1.7) {};

                         \path (2) edge[tedge] (3);
                       \end{tikzpicture}\\
    \hline
  \end{tabular}
\end{center}

By \cite[Lemma~4.4]{spherangmin}, for any $\alpha\in\Phi$,
$\bar\rho\inv(\alpha)$ has cardinality one or two. Hence, $\Phi$
splits in the following two subsets

\begin{center}
\begin{tabular*}{0.8\textwidth}{{@{\extracolsep{\fill}}ccc}}
$
\Phi^1:=\{\alpha\in\phi_\lh\;:\;\sharp\bar\rho^{-1}(\alpha)=1\}
$ &and &
$
\Phi^2:=\{\alpha\in\phi_\lh\;:\;\sharp\bar\rho^{-1}(\alpha)=2\}.
$
\end{tabular*}
\end{center}
Set also $\hPhi^1=\bar\rho\inv(\Phi^1)$ and
$\hPhi^2=\bar\rho\inv(\Phi^2)$.
It is easy to check using the pictures, that this set $\Phi^1$, coincides
with that defined in the introduction. 

\subsection{Isotropies}

Assume, in addition, that $G$ is connected.

  \begin{lemma}
\label{lem:isot}
  Let $x\in \hGB$. Then  the isotropy $G_x$ is connected and contains
  a maximal torus of $G$.
  \end{lemma}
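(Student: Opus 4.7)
The plan is to reduce, up to $G$-conjugacy, to the case in which $x$ is an $\hT$-fixed point of $\hGB$. Both conclusions of the lemma — containing a maximal torus and being connected — are invariant under conjugation in $G$, and isotropies at points of a common $G$-orbit are $G$-conjugate, so it is enough to check the lemma on a single representative in each $G$-orbit. The structural theory of spherical pairs of minimal rank developed in \cite{spherangmin} implies that every such orbit meets the finite set of $\hT$-fixed points $\{\hw\hB:\hw\in\hW\}$. I fix such a representative $x=\hw\hB$ and set $\hB':=\hw\hB\hw\inv$, so that $G_x=G\cap\hB'$.

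The torus claim is then immediate. Since $\hw$ normalizes $\hT$, we have $\hT\subset\hB'$; because $\hT$ is abelian it centralizes its subtorus $T$, so $\hT\cap G\subset Z_G(T)=T$, and combined with the obvious inclusion $T\subset\hT\cap G$ this gives $T=\hT\cap G\subset G\cap\hB'=G_x$.

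For the connectedness claim, the key point is that $G_x$ is solvable (being contained in the Borel $\hB'$) and that $T$ is automatically a maximal torus of $G_x^0$ (any torus of the subgroup $G_x^0$ of $G$ has rank at most $\dim T$, and $T\subset G_x^0$ because $T$ is connected). By the conjugacy of maximal tori in a connected solvable group, any $g\in G_x$ can be written as $h\cdot n$ with $h\in G_x^0$ and $n\in N_G(T)\cap G_x$, giving
\[
G_x \;=\; G_x^0\cdot\bigl(N_G(T)\cap G_x\bigr).
\]
Since $T\subset G_x^0$, it suffices to prove $N_G(T)\cap\hB'=T$. For this I would decompose $\hB'=\hT\ltimes\hU'$, where $\hU'$ is the unipotent radical of $\hB'$, and for $n\in N_G(T)\cap\hB'$ write uniquely $n=\hat t\hat u$ with $\hat t\in\hT$ and $\hat u\in\hU'$. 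Abelianity of $\hT$ gives $\hat t\in Z_{\hG}(T)$, so $\hat u$ normalizes $T$. But $\hU'\cap N_{\hG}(T)$ is a subgroup of the unipotent group $\hU'$, hence connected in characteristic zero, and its conjugation action on $T$ factors through the discrete group $\Aut(T)$ and must therefore be trivial. Thus $\hat u$ also centralizes $T$, and $n\in Z_{\hG}(T)\cap G=Z_G(T)=T$, since $T$ is a maximal torus of the reductive $G$.

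The main obstacle is the opening reduction: the fact that each $G$-orbit on $\hGB$ contains an $\hT$-fixed point is a distinctive feature of the minimal-rank setting and fails for spherical pairs in general. Once that is imported from \cite{spherangmin}, the rest is a routine manipulation with the Levi structure of $\hB'$ and the self-centralizing property of maximal tori in $G$.
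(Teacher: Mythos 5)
Your proof is correct but takes a genuinely different route from the paper's, and the comparison is worth recording. For the first assertion, the paper invokes Knop's monotonicity theorem for the rank of orbit closures \cite[Theorem~2.2]{Kn:WGH}, which in the minimal-rank setting forces every isotropy to have full rank, i.e.\ to contain a maximal torus. Your opening reduction — that every $G$-orbit in $\hGB$ meets the set of $\hT$-fixed points — is logically equivalent to that assertion (an orbit meets $\{\hw\hB\}$ iff some, hence every, isotropy in it contains a maximal torus of $G$, since $T$-fixed and $\hT$-fixed points coincide by regularity of $T$ in $\hT$); both proofs therefore hand off this step to an external structural result, and you should just make sure the statement you cite in \cite{spherangmin} is proved there independently of the present lemma, lest the argument become circular. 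For connectedness the two arguments really diverge: the paper takes a Levi subgroup $L$ of $G_x$ containing $T$, observes that $L$ injects into $\hB'/\hU'\cong\hT$ (a reductive group has no nontrivial normal unipotent subgroup), concludes $L$ is abelian, hence $L\subset Z_G(T)=T$, and so $G_x=R^u(G_x)\rtimes T$ is connected; you instead exploit solvability of $G_x$ directly, using conjugacy of maximal tori in the connected solvable group $G_x^0$ to write $G_x=G_x^0\cdot\bigl(N_G(T)\cap G_x\bigr)$ and then reducing to the identity $N_G(T)\cap\hB'=T$, which you establish from the Levi decomposition of $\hB'$ together with rigidity of tori (a connected group normalizing a torus centralizes it). Your route avoids invoking Mostow's Levi decomposition for the possibly disconnected $G_x$ and is somewhat more elementary; the paper's is shorter once that decomposition is granted. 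Both are sound.
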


\begin{proof}
  The fact that $G_x$ contains a maximal torus $T$ of $G$
 follows from the monotonicity properties of the rank of orbit closures
  of $G$ in $\hG/\hB$ (see \cite[Theorem~2.2]{Kn:WGH}).

  Let $L$ be a Levi subgroup (maximal reductive subgroup) of $G_x$.
  Then $L$ is isomorphic to the quotient of $G_x$ by its
  unipotent radical.  
A  reference for the existence of $L$ is \cite[Theorem~4. p286]{OV:book}. 
Up to conjugacy, one may assume that  $T\subset L$.

  But $L$ is a reductive subgroup of $\hG_x=\hB'$, that is a Borel
  subgroup of $\hG$. Hence $L$ maps injectively into
  $\hB'/\widehat U'\simeq \hT$ and $L$ is abelian. The torus
  $T$ being its own centralizer in $G$, we deduce
  that $L=T$.

  Now $G_x$ is connected as the product of $T$ and its unipotent radical.
\end{proof}

\subsection{Orbits of $G$ in $\hGB$}
\label{sec:choicey0}

We are now interested in the set ${G\backslash}\hGB$ of $G$-orbits
in $\hGB$. Let $W$ and $\hW$ be the Weyl groups of $G$ and $\hG$
respectively.
Since $T$ is a regular torus in $\hG$ (see \cite[Lemma~2.3]{Br:ratsmooth}), $W$ naturally embeds
in $\hW$.

\begin{lemma}
\label{lem:paramGorb}
  The map
$$
\begin{array}{ccl}
{W\backslash}\hW&\longto&{G\backslash}\hGB\\
\widehat w&\longmapsto&G\widehat w\widehat B/\widehat B
\end{array}
$$
is a well-defined bijection.
\end{lemma}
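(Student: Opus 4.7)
The plan is to deduce the lemma from the two facts already in play: $W$ embeds in $\hW$ via the regularity of $T$ in $\hG$, and $G$-isotropies on $\hGB$ are connected with Levi $T$ (Lemma~\ref{lem:isot}). Well-definedness comes almost for free: any lift $n\in N_G(T)$ of $w\in W$ automatically lies in $N_{\hG}(\hT)$, since $n$ normalises $T$ and hence its centraliser $Z_{\hG}(T)=\hT$; thus the product $w\hw\in\hW$ is a well-defined element of $\hW$, and the orbit $G(w\hw)\hB/\hB$ coincides with $G\hw\hB/\hB$ because $n\in G$.

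For surjectivity I would argue as follows. Pick any $G$-orbit $\Omega\subset\hGB$ and any $x\in\Omega$. Lemma~\ref{lem:isot} provides a maximal torus of $G$ inside $G_x$, and up to replacing $x$ by a suitable $G$-translate one may assume $T\subset G_x$, i.e.\ that $x$ is $T$-fixed. Since $T$ is regular in $\hG$, its fixed locus in $\hGB$ agrees with $(\hGB)^{\hT}=\{\hw\hB/\hB:\hw\in\hW\}$. Hence $\Omega=G\hw\hB/\hB$ for some $\hw\in\hW$.

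For injectivity, suppose $g\hw_1\hB/\hB=\hw_2\hB/\hB$ for some $g\in G$, and write $x=\hw_1\hB/\hB$. Both $x$ and $g^{-1}\hw_2\hB/\hB=x$ are $T$-fixed, so $g^{-1}Tg\subset G_x$. The isotropy $G_x$ is connected with Levi $T$ (by the proof of Lemma~\ref{lem:isot}), so $T$ and $g^{-1}Tg$ are two of its maximal tori; choose $h\in G_x$ such that $n:=gh$ normalises $T$, and let $w\in W$ be its class. Then $n\cdot x=\hw_2\hB/\hB$, which gives $\hw_2^{-1}n\hw_1\in\hB$. Since $n\in N_{\hG}(\hT)$ by the first paragraph, both $w\hw_1$ and $\hw_2$ represent elements of $\hW$, and $\hw_2^{-1}w\hw_1\in\hB\cap N_{\hG}(\hT)=\hT$ forces $w\hw_1=\hw_2$ in $\hW$.

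The main subtlety, and the one that needs to be handled carefully at each step, is the book-keeping: one must consistently view representatives $n\in N_G(T)$ as elements of $N_{\hG}(\hT)$ so that products like $n\hw$ descend to well-defined elements of $\hW$. Once this is granted, everything else follows immediately from Lemma~\ref{lem:isot} and the description of $T$-fixed points in the flag variety of $\hG$.
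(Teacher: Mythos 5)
Your proof is correct and follows essentially the same route as the paper's: well-definedness from the inclusion $N_G(T)\subset N_{\hG}(\hT)$, surjectivity from Lemma~\ref{lem:isot} together with the identification of $T$-fixed points in $\hGB$ with $\hW$, and injectivity by conjugating the two copies of the maximal torus inside the common isotropy to produce an element of $N_G(T)$. The only cosmetic difference is that you spell out the final step $\hB\cap N_{\hG}(\hT)=\hT$, where the paper simply concludes directly.
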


\begin{proof}
It is clear that the $G$-orbit   $G\widehat w\widehat B/\widehat B$ does not
depend on the representative $\widehat w$ in its class $W\widehat w$. 
Hence, the map of the lemma is well-defined.

Since  $T$ is a regular torus in $\hG$, its fixed points in $\hG/\hB$
are the points $\hw\hB/\hB$ for $\widehat w\in\hW$.
But Lemma~\ref{lem:isot} implies that each $G$-orbit in $\hG/\hB$
contains a $T$-fixed point. The surjectivity follows.

Let now $\hw$ and $\hw'$ in $\hW$ such that $G\widehat w\widehat B/\widehat
B=G\widehat w'\widehat B/\widehat B$.
To get the injectivity and finish the proof, it remains to prove that
$W\widehat w=W\widehat w'$.
Choose $g\in G$ such that $g \hw\hB/\hB=\hw'\hB/\hB$.
Let $H$ and $H'$ denote the isotropies in $G$ of the points
$\hw\hB/\hB$ and $\hw'\hB/\hB$, respectively.
Observe that $T$ is a maximal torus of both $H$ and $H'$. 
Then, $T$ and $gTg\inv$ are two maximal tori of $H'$, and 
there exists $h'\in H'$ such that $h'Th'\inv=gTg\inv$.
Thus, $n:=h'\inv g$ normalizes $T$ and satisfies $
n \hw\hB/\hB=h'\inv \hw'\hB/\hB=\hw'\hB/\hB$. It follows that $\hw'\in W\widehat w$.
\end{proof}

Let $\ell\,:\,\hW\longto \NN$ denote the length function. 
It is well known that $\ell(\hw)=\sharp(\hPhi^+\cap \hw\inv\hPhi^-)$,
where
$\hPhi^-=-\hPhi^+$.

We fix, once for all, $\hy_0\in\hW$ such that $G\hy_0\hB/\hB$ is the
open $G$-orbit in $\hGB$ and such that $\hy_0$ has minimal length with this
property.
Let $H_0$ denote the stabilizer of $\hy_0\hB/\hB$ in $G$.

Given a root $\alpha$ (resp. $\halpha$) of $\lg$ (resp. $\hlg$),
denote by $\lg_\alpha$ (resp. $\lg_\halpha$) the corresponding root
space.

\begin{lemma}
\label{lem:Lieisoty0}
The Lie algebra of $H_0$ is 
$$
\Lie(T)\oplus\bigoplus_{\alpha\in \Phi^+\cap\Phi^1}\lg_\alpha.
$$  
\end{lemma}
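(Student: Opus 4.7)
The plan is to combine a dimension count with a minimality argument on $\hy_0$. Since $H_0$ is the stabilizer in $G$ of $\hy_0\hB/\hB$, its Lie algebra equals $\lg \cap \Ad(\hy_0)(\Lie\hB)$. As $T \subset H_0$ by Lemma~\ref{lem:isot}, this subspace is $T$-stable, and its zero weight space is $\Lie(T)$ since $T$ is its own centralizer in $G$. Hence
\[
\Lie(H_0) = \Lie(T) \oplus \bigoplus_{\alpha\in S}\lg_\alpha
\]
for a unique subset $S\subset\Phi$. The openness of $G\hy_0\hB/\hB$ in $\hGB$ gives $\dim H_0 = \dim G - |\hPhi^+|$, and combining $\dim G = \dim T + 2|\Phi^+|$ with the partition $|\hPhi^+| = |\Phi^+\cap\Phi^1| + 2|\Phi^+\cap\Phi^2|$ obtained in Section~2.1 yields $|S| = |\Phi^+\cap\Phi^1|$. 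It therefore suffices to establish the inclusion $\Phi^+\cap\Phi^1 \subset S$.

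To prove this inclusion, fix $\alpha \in \Phi^+\cap\Phi^1$ and let $\halpha \in \hPhi^+$ be its unique preimage under $\bar\rho$. Since $\bar\rho^{-1}(\alpha)=\{\halpha\}$, the root space $\lg_\alpha$ coincides with $\lg_\halpha\subset\hlg$, so $\lg_\alpha \subset \Ad(\hy_0)(\Lie\hB)$ if and only if $\hy_0^{-1}\halpha \in \hPhi^+$. Assume for contradiction $\hy_0^{-1}\halpha \in \hPhi^-$: then by the strong exchange property in $\hW$, $\ell(s_\halpha\hy_0) < \ell(\hy_0)$. For $\alpha \in \Phi^1$ the reflection $s_\alpha \in W\subset\hW$ coincides with $s_\halpha \in \hW$: this follows, for $\alpha \in \Delta\cap\Phi^1$ simple, from the equality $\halpha^\vee = \alpha^\vee$ (a standard feature of spherical pairs of minimal rank, cf.\ \cite{spherangmin}), and extends to all of $\Phi^1$ by $W$-conjugation since $W$ preserves $\Phi^1$. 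Consequently $s_\alpha\hy_0 \in W\hy_0$ has length strictly less than $\ell(\hy_0)$, while by Lemma~\ref{lem:paramGorb} it still represents the open $G$-orbit on $\hGB$. This contradicts the minimal-length choice of $\hy_0$. Hence $\hy_0^{-1}\halpha \in \hPhi^+$ and $\alpha \in S$, which together with the dimension count finishes the proof.

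The main obstacle I anticipate is justifying the identification $s_\alpha = s_\halpha$ in $\hW$ for $\alpha \in \Phi^1$: this is the point where the spherical-of-minimal-rank hypothesis enters crucially, ensuring that a reflection coming from $W$ still acts as a single reflection in $\hW$ and can therefore shorten $\hy_0$. Everything else in the argument is essentially packaging: the $T$-weight decomposition of $\Lie(H_0)$, the numerical identity between $\dim G - |\hPhi^+|$ and $\dim T + |\Phi^+\cap\Phi^1|$, and the parameterization of $G$-orbits in $\hGB$ by $W\backslash\hW$ provided by Lemma~\ref{lem:paramGorb}.
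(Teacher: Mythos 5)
Your proof is correct and takes essentially the same route as the paper: a $T$-weight decomposition of $\Lie(H_0)$, a dimension count equating $\dim G - |\hPhi^+|$ with $\dim T + |\Phi^+\cap\Phi^1|$, and the key step $\Phi^+\cap\Phi^1\subset S$ via minimality of $\ell(\hy_0)$ together with the identification $s_\alpha=s_\halpha$ for $\alpha\in\Phi^1$. The paper justifies $s_\alpha=s_\halpha$ directly for every $\alpha\in\Phi^1$ from $\lg_{\pm\alpha}=\lg_{\pm\halpha}$ (which you already record earlier in your argument), so the detour through simple roots and $W$-conjugation is unnecessary, but the reasoning is otherwise identical in structure and content.
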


\begin{proof}
It is clear that $H_0$ contains $\hT\cap G=T$. Then
$$
\Lie(H_0)=\Lie(T)\oplus\bigoplus_{\alpha\in \Sc}\lg_\alpha,
$$
for some subset $\Sc$ of $\Phi$.

Let now $\alpha\in \Phi^1\cap\Phi^+$. We want to prove that
$\alpha\in\Sc$.
Let $\widehat\alpha\in\hPhi$ such that $\rho(\widehat\alpha)=\alpha$. 
We have $\lg_{\pm\widehat\alpha}=\lg_{\pm \alpha}$.
But, exactly one between  $\lg_{\widehat\alpha}$ and $\lg_{-\widehat\alpha}$
is contained in the Borel Lie algebra $\Lie(\hy_0\hB\hy_0\inv)$.
Assume, for a contradiction that $\lg_{-\widehat\alpha}\subset\Lie(\hy_0\hB\hy_0\inv)$.

Set $\widehat\beta=-\hy_0\inv\widehat\alpha$ and
$\widehat{y}_0'=s_{\widehat\alpha}\hy_0$. 
Since $\widehat\beta$ is positive and $\hy_0\widehat\beta$ is negative,
$\ell(\widehat{y}_0')<\ell(\hy_0)$.
But, $\lg_{\pm\widehat\alpha}=\lg_{\pm \alpha}$ implies that
$s_\alpha=s_{\widehat\alpha}$ and that $\widehat{y}_0'\in W\hy_0$. 
This contradicts the minimality assumption on the length of
$\hy_0$.\\

At this point, we proved that the Lie algebra of the lemma is
contained in $\Lie(H_0)$. But $\dim(G/H_0)=\dim(\hGB)$, hence
$$
\begin{array}{ll}
  \dim(H_0)&=\dim(G)-\dim(\hGB)\\
&=\dim(T)+2\sharp\Phi^+-\sharp\hPhi^+\\
&=\dim(T)+\sharp(\Phi^+\cap\Phi^1)
\end{array}
$$
and we can conclude.
\end{proof}

\begin{lemma}
 \label{lem:b+b-}
Fix $\beta \, \in \Phi^2$. Then there is exactly one root in $\bar\rho \inv (\beta)$ which is sent in $\hPhi^+$ by the action of $\hy_0 \inv$. 
 \end{lemma}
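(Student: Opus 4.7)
The plan is to exploit the previous lemma (Lemma~\ref{lem:Lieisoty0}), which identifies $\Lie(H_0) = \lg \cap \Lie(\hy_0\hB\hy_0\inv)$ with $\Lie(T) \oplus \bigoplus_{\alpha \in \Phi^+ \cap \Phi^1}\lg_\alpha$. The strategy is to rule out the two degenerate possibilities (both preimages of $\beta$ sent to positive roots, or both sent to negative roots) by comparing $\lg_\beta$ with the appropriate Borel subalgebra of $\hlg$.

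Let me write $\bar\rho\inv(\beta) = \{\widehat\beta_1,\widehat\beta_2\}$. First I would observe that, viewed as a $T$-weight space inside $\hlg$, the weight-$\beta$ space is exactly $\hlg_{\widehat\beta_1}\oplus\hlg_{\widehat\beta_2}$, since $\bar\rho\inv(\beta)$ enumerates the roots of $\hG$ that restrict to $\beta$. Since $\lg_\beta$ is a $T$-weight subspace of $\hlg$ of weight $\beta\neq 0$, this yields the inclusion
\[
\lg_\beta \;\subset\; \hlg_{\widehat\beta_1}\oplus\hlg_{\widehat\beta_2}.
\]

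Now suppose, for contradiction, that both $\hy_0\inv\widehat\beta_1$ and $\hy_0\inv\widehat\beta_2$ lie in $\hPhi^+$. Then both root spaces $\hlg_{\widehat\beta_i}$ lie in the Borel subalgebra $\Lie(\hy_0\hB\hy_0\inv)$, hence the whole weight-$\beta$ space does, and in particular $\lg_\beta \subset \Lie(\hy_0\hB\hy_0\inv)$. Intersecting with $\lg$ gives $\lg_\beta \subset \Lie(H_0)$. But Lemma~\ref{lem:Lieisoty0} forces $\beta\in\Phi^+\cap\Phi^1$, contradicting $\beta\in\Phi^2$ (since $\Phi^1$ and $\Phi^2$ partition $\Phi$). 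Symmetrically, if both $\hy_0\inv\widehat\beta_i$ are negative, then both $\hy_0\inv(-\widehat\beta_i)$ are positive; applying the preceding argument to $-\beta\in\Phi^2$ (note $-\beta\in\Phi^2$ because $\bar\rho$ commutes with negation) and its preimages $\{-\widehat\beta_1,-\widehat\beta_2\} = \bar\rho\inv(-\beta)$ yields again a contradiction. This leaves exactly one of the two preimages sent into $\hPhi^+$.

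The main subtlety to keep in mind is the clean identification of $\lg_\beta$ as a $T$-weight subspace of the ambient $T$-weight space $\hlg_{\widehat\beta_1}\oplus\hlg_{\widehat\beta_2}$ of $\hlg$; once this is in place the argument is essentially a reuse of Lemma~\ref{lem:Lieisoty0} together with the symmetry $\beta\leftrightarrow -\beta$. I do not anticipate a serious obstacle, as the hard work of controlling $\Lie(H_0)$ and the minimality of $\hy_0$ has already been done in the preceding lemma.
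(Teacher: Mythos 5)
Your proof is correct and follows essentially the same route as the paper's: it reduces to the inclusion $\lg_\beta\subseteq\hlg_{\widehat\beta_1}\oplus\hlg_{\widehat\beta_2}$, derives a contradiction with Lemma~\ref{lem:Lieisoty0} when both images $\hy_0\inv\widehat\beta_i$ are positive, and then applies the same argument to $-\beta$. The only cosmetic difference is that you justify the containment $\lg_\beta\subseteq\hlg_{\widehat\beta_1}\oplus\hlg_{\widehat\beta_2}$ by a direct $T$-weight-space argument, whereas the paper cites it from \cite[Lemma~4.4]{spherangmin}.
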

 
\begin{proof}
 Write $\bar\rho \inv (\beta) = \{ \widehat\beta_1 \, , \widehat\beta_2 \}$. Because of \cite[Lemma~4.4]{spherangmin}, we have that $ \lg_{\beta} \subseteq \lg_{\widehat\beta_1}\oplus \lg_{\widehat\beta_2}$ and  $ \lg_{\beta} \neq   \lg_{\widehat\beta_i}$ for $i=1 \, , 2.$\\
  Lemma~\ref{lem:Lieisoty0} implies that  $ \lg_{\beta} \not \subseteq \Lie{(H_0)}$. But since $H_0= H \,  \cap \, \hy_0\hB \hy_0 \inv$, this means that $\hy_0 \inv \beta_1$ and $\hy_0 \inv \widehat\beta_2 $ are not both positive roots, equivalently: $ \{ \hy_0 \inv \widehat\beta_1 \, , \, \hy_0 \inv \widehat\beta_2 \} \not \subseteq \hPhi^+$.
  The same argument applied to $- \beta$ implies that $ \{- \hy_0 \inv
  \widehat\beta_1 \, , \, -\hy_0 \inv \widehat\beta_2 \} \not \subseteq
  \hPhi^+$. The lemma follows.
\end{proof}
 
\bigskip
 Lemma \ref{lem:b+b-} allows us to distinguish between the roots in the fiber of $\bar\rho$ by means of the action of $ \hy_0 \inv$. In particular we introduce the following notation. 
 If $\beta \in \Phi^2 \, \cap \, \Phi^+$, then $\widehat\beta^+ $ (resp. $\widehat\beta^-$ ) is the unique element in $\bar\rho \inv (\beta) $ that satisfies: $\hy_0 \inv \widehat\beta^+ \in \hPhi^+$ (resp. $\hy_0 \inv \widehat\beta^- \in \hPhi^-$).

\subsection{The graph of $G$-orbits in $\hGB$}

We recall the definition given in \cite{GammaGH} of a graph $\Gamma(\hG/G)$
whose vertices are the elements of $G\backslash\hGB$. The original
construction of $\Gamma(\hG/G)$ due to M. Brion is equivalent but 
slightly different (see \cite{Br:GammaGH}).

If $\widehat \alpha$ belongs to $\widehat\Delta$, we denote by $P_\ha$ the associated
minimal standard parabolic subgroup of $\hG$.
Consider the unique $\hG$-equivariant map
$\pi_\ha\,:\,\hGB\longto\hG/P_\ha$ which is a $\PP^1$-bundle. 

Let $\Orb\in G\backslash\hGB$ and $\ha\in\widehat\Delta$. 
Consider $\pi_\ha\inv(\pi_\ha(\Orb))$.  
Two cases occur.
\begin{itemize}
\item $G$ acts transitively on $\pi_\ha\inv(\pi_\ha(\Orb))$.
\item $\pi_\ha\inv(\pi_\ha(\Orb)$ contains two $G$-orbits, one
  closed $V$ and one open $V'$. Then we says that {\it $\widehat\alpha$
    raises $V$ to $V'$}. In this case, $\dim(V')= \dim(V)+1$, and for any $x \in V$, $\pi_\ha\inv(\pi_\ha(x)) \cap V = \{x \}$.
\end{itemize}

\begin{defi}
Let $\Gamma(\hG/G)$ be the oriented graph with vertices the elements of
$G\backslash\hGB$ and edges labeled by $\widehat\Delta$, where $V$ is joined to $V'$
by an edge labeled by $\ha$ if $\ha$ raises $V$ to $V'$. 
\end{defi}

\begin{lemma}
\label{lem:ellm}
  Let $\hw\in \hW$.
We have 
$$
\dim G\hw\hB/\hB-\dim G/B\leq \ell(\hw).
$$
Moreover, for any $G$-orbit $\Orb$ in $\hGB$ there exists $\hw\in \hW$
such that $\Orb=G\hw$ and
$$
\dim G\hw\hB/\hB-\dim G/B=\ell(\hw).
$$
\end{lemma}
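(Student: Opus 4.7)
The plan is to compute $\dim G\hw\hB/\hB$ from the Lie algebra of the isotropy $G\cap\hw\hB\hw\inv$, compare it with $\ell(\hw)$, and then obtain the equality case through a length-reduction argument inside the coset $W\hw$.

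\textbf{Dimension count.} As in Lemma~\ref{lem:Lieisoty0}, since $T\subset\hT\subset\hw\hB\hw\inv$, the Lie algebra of $G\cap\hw\hB\hw\inv$ has the form $\Lie(T)\oplus\bigoplus_{\alpha\in\Sc(\hw)}\lg_\alpha$ for some $\Sc(\hw)\subset\Phi$. For $\alpha\in\Phi^1$ with $\bar\rho\inv(\alpha)=\{\widehat\alpha\}$, one has $\alpha\in\Sc(\hw)$ if and only if $\hw\inv\widehat\alpha\in\hPhi^+$. For $\alpha\in\Phi^2$ with $\bar\rho\inv(\alpha)=\{\widehat\alpha_1,\widehat\alpha_2\}$, the fact that $\lg_\alpha\subset\lg_{\widehat\alpha_1}\oplus\lg_{\widehat\alpha_2}$ is contained in neither summand (Lemma~4.4 of \cite{spherangmin}) forces $\alpha\in\Sc(\hw)$ to be equivalent to \emph{both} $\hw\inv\widehat\alpha_i\in\hPhi^+$.

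\textbf{The inequality.} Denote by $b(\hw)$, $c(\hw)$ the number of $\alpha\in\Phi^+\cap\Phi^2$ whose fiber $\bar\rho\inv(\alpha)$ has respectively exactly one, or both, elements sent to $\hPhi^-$ by $\hw\inv$; likewise set $d(\hw)=\#\{\alpha\in\Phi^+\cap\Phi^1\,:\,\hw\inv\widehat\alpha\in\hPhi^-\}$. A direct count, using $\#\hPhi^+=\#\Phi^++\#(\Phi^+\cap\Phi^2)$ and the standard formula for $\ell(\hw)$, yields
\begin{equation}\label{eq:bl}
\dim G\hw\hB/\hB-\dim G/B=b(\hw)\quad\text{and}\quad \ell(\hw)=d(\hw)+b(\hw)+2c(\hw),
\end{equation}
so that $\ell(\hw)-(\dim G\hw\hB/\hB-\dim G/B)=d(\hw)+2c(\hw)\geq 0$, which is part~(1).

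\textbf{Equality via minimal length.} For any $G$-orbit $\Orb$, Lemma~\ref{lem:paramGorb} gives $\Orb=G\hw\hB/\hB$, and we pick $\hw$ of minimal length within $W\hw$. By~\eqref{eq:bl}, it suffices to show $d(\hw)=c(\hw)=0$. If $d(\hw)>0$, pick $\alpha\in\Phi^+\cap\Phi^1$ with $\hw\inv\widehat\alpha\in\hPhi^-$: under $W\hookrightarrow\hW$, $s_\alpha=s_{\widehat\alpha}$, so $\ell(s_\alpha\hw)=\ell(\hw)-1$, contradicting minimality. If $c(\hw)>0$, pick $\alpha\in\Phi^+\cap\Phi^2$ with both $\hw\inv\widehat\alpha_i\in\hPhi^-$; the identity $s_\alpha=s_{\widehat\alpha_1}s_{\widehat\alpha_2}$ in $\hW$ with $\widehat\alpha_1\perp\widehat\alpha_2$ (see below) produces the inversion set $\{\widehat\alpha_1,\widehat\alpha_2\}$, and a short calculation with inversion sets gives $\ell(s_\alpha\hw)=\ell(\hw)-2$, again a contradiction. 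Hence $d(\hw)=c(\hw)=0$ and equality holds.

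\textbf{Main difficulty.} The only delicate input is the identity $s_\alpha=s_{\widehat\alpha_1}s_{\widehat\alpha_2}$ with orthogonal factors for $\alpha\in\Phi^2$. For simple $\alpha$ this is a direct inspection of the folded Dynkin diagrams displayed above, where the two preimages of any simple $\Phi^2$-root correspond to non-adjacent nodes; the extension to all of $\Phi^2$ uses the $W$-equivariance of $\bar\rho$ together with the $W$-transitivity on $\Phi^2$ in each irreducible case of the classification. In the tensor-product case $\hG=G\times G$ it is immediate: $s_\alpha=(s_\alpha,s_\alpha)=s_{(\alpha,0)}s_{(0,\alpha)}$.
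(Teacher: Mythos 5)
Your proof is correct and takes a genuinely different route from the paper's. For the inequality, the paper considers the $\hB$-orbits in $\hG/G$ and uses a Bott--Samelson-type resolution: a reduced word for $\hw$ gives a map from a variety of dimension $\ell(\hw)+\dim\hB/B$ onto the closure of $\hB\hw\inv G/G$. You instead compute $\dim G\hw\hB/\hB-\dim G/B$ and $\ell(\hw)$ directly from the signs of the roots $\hw\inv\widehat\alpha$ in the fibers of $\bar\rho$, which yields the stronger identity $\ell(\hw)-(\dim G\hw\hB/\hB-\dim G/B)=d(\hw)+2c(\hw)$. For the equality case, the paper reads a reduced word off a path in $\Gamma(\hG/G)$ from the closed orbit, invoking \cite[Proposition~2.2]{spherangmin} for the existence of such a path; you instead pick $\hw$ of minimal length in its $W$-coset and run a length-reduction argument using $\ell(s_{\widehat\gamma}\hw)<\ell(\hw)\Leftrightarrow\hw\inv\widehat\gamma<0$ together with the identities $s_\alpha=s_{\widehat\alpha}$ ($\alpha\in\Phi^1$) and $s_\alpha=s_{\widehat\alpha_1}s_{\widehat\alpha_2}$ with $\widehat\alpha_1\perp\widehat\alpha_2$ ($\alpha\in\Phi^2$). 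Your argument is more elementary and self-contained (it does not rely on the external reference), and the defect formula is a useful by-product; the paper's argument is more geometric and closer in spirit to the graph-theoretic bookkeeping it uses elsewhere.

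Two small remarks. First, in the case $d(\hw)>0$ you write $\ell(s_\alpha\hw)=\ell(\hw)-1$; since $\widehat\alpha$ need not be a \emph{simple} root of $\hG$, the drop in length can be larger than $1$. What you actually have, and all you need, is $\ell(s_\alpha\hw)<\ell(\hw)$. Second, for the orthogonality $\widehat\alpha_1\perp\widehat\alpha_2$ you appeal to inspection of folded diagrams plus $W$-transitivity on $\Phi^2$; this is heavier than necessary. If $\widehat\alpha_1\pm\widehat\alpha_2$ were a root of $\hG$, its image under $\rho$ would be $0$ or $2\alpha$, neither of which lies in $\Phi$, so $\widehat\alpha_1\pm\widehat\alpha_2\notin\hPhi$ and the two roots are orthogonal — this is exactly the argument used in the paper's proof of Lemma~\ref{sltriples}. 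It is also worth pointing out that the group-level identity $s_\alpha=s_{\widehat\alpha_1}s_{\widehat\alpha_2}$ inside $\hW$ follows cleanly from the compatible $\sl_2$-triples of Lemma~\ref{sltriples}: the representative $n_\alpha=\exp(X_\alpha)\exp(-X_{-\alpha})\exp(X_\alpha)\in N_G(T)$ factors as a commuting product $n_{\widehat\alpha_1}n_{\widehat\alpha_2}$ in $N_{\hG}(\hT)$.
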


\begin{proof}
  We consider the $\hB$-orbits in $\hG/G$.
The closed orbit is $\hB/B$. 
Choose a reduced expression of $\hw=s_{\ha_s}\cdots
s_{\ha_1}$. Consider the quotient  $P_{\halpha_1}\times_\hB \cdots \times_\hB P_{\halpha_s}\times_\hB
\hB/B$  of $P_{\halpha_1}\times \cdots \times P_{\halpha_s}\times
\hB/B$ by the action of $B^s$ given by 
$(b_1,\dots,b_s).(p_1,\dots,p_s,\hB/B)=(p_1b_1\inv,\dots,b_{s-1}p_sb_s^\inv,b_s\hB/B)$
(with obvious notation). Consider the regular map
$$
\begin{array}{ccl}
P_{\halpha_1}\times_\hB \cdots \times_\hB P_{\halpha_s}\times_\hB
\hB/B &\longto& \hG/G\\
(p_1,\dots,p_s,x)&\longmapsto&p_1\cdots p_sx.
\end{array}
$$
The dimension of the left hand side is $\ell(\hw)+\dim(\hB/B)$. 
The right hand side is a $\hB$-orbit closure containing $\hB\hw\inv
G/G$.
The first inequality follows.

This equality is reached when the expression is obtained by reading
the labels on some path from the closed orbit to $\Orb$ in the graph
$\Gamma(\hG/G)$.
Such a path exists by \cite[Proposition~2.2]{spherangmin}. 
\end{proof}

\bigskip
Set
$$
\ell_m(\hw)=\min\{\ell(w\hw)\,:\, w\in W\}.
$$
By Lemma~\ref{lem:ellm}, $\ell_m(\hw)=\dim (G\hw\hB/\hB)-\dim(G/B)$. If an element in $ \hw \in \hW$ satisfies $\ell(\hw)= \ell_m(\hw)$, \, we say that $\hw$ has \textit{minimal length}.



\begin{lemma}
\label{lem:carddiv}
\begin{enumerate}
\item There are $\sharp \Delta^2$ codimension one $G$-orbits in
  $\hGB$.
\item Let $\widehat\alpha\in\widehat\Delta$. Then $G\hy_0s_\halpha\hB/\hB$ has
  codimension one if and only if $\hy_0\halpha\in\hPhi^2$.
\item Let $\widehat\alpha\in\widehat\Delta$ such that
  $\hy_0\halpha\in\hPhi^2$. Then, the Lie algebra of the stabilizer of
  $\hy_0s_\halpha\hB/\hB$ is
$$
\Lie(H_0)\oplus \lg_{-\rho(\hy_0\halpha)}.
$$
\end{enumerate}
\end{lemma}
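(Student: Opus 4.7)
The plan is to prove (2) and (3) by direct computation (geometry of the $\PP^1$-fibration $\pi_\ha$ for (2), Lie algebras of Borel subgroups for (3)), and then deduce (1) from (2) together with the graph $\Gamma(\hG/G)$ and a counting argument.

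For (2), the $\PP^1$-fiber $\hy_0\Pha/\hB$ of $\pi_\ha$ through the basepoint contains both $\hy_0\hB/\hB$ (in the open $G$-orbit) and $\hy_0 s_\ha\hB/\hB$; these two points lie in the same $G$-orbit iff $s_{\hy_0\ha}\in W$, which, on inspection of the $\sl_2$-triple attached to $\hy_0\ha$ together with the coroot relation $\alpha^\vee=\hb_1^\vee+\hb_2^\vee$ when $\alpha\in\Phi^2$, is equivalent to $\hy_0\ha\in\hPhi^1$. If instead $\hy_0\ha\in\hPhi^2$, the two points lie in distinct $G$-orbits; the $G$-saturation of the $\PP^1$-fiber contains the open orbit yet has dimension at most $\dim(G\hy_0 s_\ha\hB/\hB)+1$, forcing $G\hy_0 s_\ha\hB/\hB$ to have codimension exactly one.

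For (3), I observe that $\Ad(\hy_0 s_\ha)\Lie\hB$ is obtained from $\Ad(\hy_0)\Lie\hB$ by swapping the single root direction $\lg_{\hy_0\ha}$ for $\lg_{-\hy_0\ha}$. Intersecting with $\lg$ and comparing with $\Lie H_0$ from Lemma~\ref{lem:Lieisoty0}: each $\lg_\alpha$ with $\alpha\in\Phi^{1,+}$ is preserved, since its unique preimage in $\hPhi^1$ differs from $\hy_0\ha\in\hPhi^2$. Writing $\alpha:=\rho(\hy_0\ha)\in\Phi^2$ with preimages $\hy_0\ha=:\hb_1$ and $\hb_2$, Lemma~\ref{lem:b+b-} gives $\hb_2\in\hy_0\hPhi^-$, so the new Borel contains both $\lg_{-\hb_1}$ and $\lg_{-\hb_2}$; since $\lg_{-\alpha}$ is the diagonal of $\lg_{-\hb_1}\oplus\lg_{-\hb_2}$ inside $\lg$, this adds exactly $\lg_{-\alpha}$ to the intersection with $\lg$. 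The dimension identity $\dim H_\ha=\dim H_0+1$ coming from (2) then gives $\Lie H_\ha=\Lie H_0\oplus\lg_{-\rho(\hy_0\ha)}$.

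For (1), I first check that every codim-one $G$-orbit $V$ is of the form $V_\ha:=G\hy_0 s_\ha\hB/\hB$ with $\hy_0\ha\in\hPhi^2$: since $\overline V\neq\hGB$ cannot be stable under every minimal parabolic $\Pha$ of $\hG$, for some $\ha$ the $\PP^1$-fiber of $\pi_\ha$ through a point of $V$ meets the open orbit and produces an edge from $V$ to the open orbit in $\Gamma(\hG/G)$, identifying $V$ with $V_\ha$; by (2) we then have $\hy_0\ha\in\hPhi^2$. The main obstacle is the counting: by (3), $V_\ha=V_{\ha'}$ reduces to the Lie algebras $\Lie H_0\oplus\lg_{-\rho(\hy_0\ha)}$ and $\Lie H_0\oplus\lg_{-\rho(\hy_0\ha')}$ being $G$-conjugate, i.e.\ to matching the roots $\rho(\hy_0\ha)\in\Phi^2$ up to the $N_G(H_0)/T$-action. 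Combined with Lemma~\ref{lem:b+b-}, which canonically pairs the two preimages of each $\beta\in\Phi^{2,+}$, this should give a bijection between codim-one orbits and $\Delta^2$; the cleanest verification may proceed case-by-case through the classification of minimal-rank pairs listed in the introduction.
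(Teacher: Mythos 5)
Your parts (2) and (3) track the paper closely. For (3), both you and the paper observe that passing from $\hy_0$ to $\hy_0 s_\ha$ swaps $\lg_{\hy_0\ha}$ for $\lg_{-\hy_0\ha}$ in the (conjugated) Borel, note via Lemma~\ref{lem:b+b-} that the other preimage $-\hb_2$ of $-\rho(\hy_0\ha)$ already lies in $\hy_0\hPhi^+$, conclude $\lg_{-\rho(\hy_0\ha)}\subseteq\lg_{-\hy_0\ha}\oplus\lg_{-\hb_2}$ sits inside the stabilizer's Lie algebra, and finish by a dimension count. For (2) your $\PP^1$-fibration picture is correct but more machinery than needed; the paper just records that $G\hy_0 s_\ha\hB/\hB$ is either open or of codimension one, and that $\hy_0 s_\ha\in W\hy_0$ iff $\hy_0\ha\in\hPhi^1$. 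Your identification of ``same $G$-orbit'' with ``$s_{\hy_0\ha}\in W$'' is the same coset computation in disguise.

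Where you deviate, and where the gap is, is (1). The paper does \emph{not} count divisors at the top; it invokes \cite[Proposition~2.3]{spherangmin} to equate the number of codimension-one orbits with the number of orbits of dimension $\dim(G/B)+1$, then uses the bijection $W\backslash\hW\simeq G\backslash\hGB$ of Lemma~\ref{lem:paramGorb} together with Lemma~\ref{lem:ellm} to identify the latter with the nontrivial cosets $Ws_\halpha$, $\halpha\in\widehat\Delta$. The count then reduces to two root-theoretic facts: $s_\halpha\in W$ iff $\halpha\in\widehat\Delta^1$, and for distinct $\halpha_1,\halpha_2\in\widehat\Delta^2$, $s_{\halpha_1}s_{\halpha_2}\in W$ iff $\rho(\halpha_1)=\rho(\halpha_2)$. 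You instead stay at the top: you correctly argue via $\Gamma(\hG/G)$ that every divisor is some $\overline{V_\ha}$ with $\hy_0\ha\in\hPhi^2$, but then the counting of distinct $V_\ha$'s is left open --- you reduce it to an $N_G(H_0)/T$-conjugacy question and end by proposing a case-by-case check. That is a genuine gap: the $N_G(H_0)/T$ action is not something you have any handle on, and (3) by itself does not control which stabilizers are $G$-conjugate. The natural fix is to reuse Lemma~\ref{lem:paramGorb}: $V_\ha=V_{\ha'}$ iff $W\hy_0 s_\ha = W\hy_0 s_{\ha'}$ iff $s_{\hy_0\ha}s_{\hy_0\ha'}\in W$, and then prove directly (using orthogonality of the two preimages and the coroot identity from Lemma~\ref{sltriples}) that this happens iff $\rho(\hy_0\ha)=\pm\rho(\hy_0\ha')$; this parallels the paper's simple-root statement and closes the count without any appeal to the classification.
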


\begin{proof}
  By \cite[Proposition~2.3]{spherangmin}, the number of codimension one $G$-orbits in $\hGB$
  is the number of $G$-orbits $\Orb$ of dimension $\dim(G/B)+1$.
This is also 
$$
\sharp(\{Ws_\halpha\in\; W\setminus\hW \, ; \, \halpha\in\widehat\Delta \}-\{W\}).
$$
But $s_\halpha\in W$ if and only if $\widehat\alpha\in\widehat\Delta^1$. 
Moreover, if  $\widehat\alpha_1\neq \widehat\alpha_2 \in\widehat\Delta^2$  then
$s_{\halpha_1}s_{\halpha_2}\in W$ if and only if $\rho(
\widehat\alpha_1)=\rho(\widehat\alpha_2)$.\\

Since $G\hy_0\hB/\hB$ is dense, either
$G\hy_0s_\halpha\hB/\hB=G\hy_0\hB/\hB$ or $G\hy_0s_\halpha\hB/\hB$ has codimension one. 
But $\hy_0s_\halpha\hB/\hB$ belongs to the open $G$-orbit if and only
if $\hy_0s_\halpha\in W \hy_0$ if and only
if $\hy_0\halpha\in\hPhi^1$.\\

Let $\ha$ be like in the last assertion. 
Then
$$
\hy_0s_\ha\hPhi^+=(\hy_0\hPhi^+\cup\{-\hy_0\ha\})-\{\hy_0\ha\}.
$$
Now,
the fact that $\Lie(H_0)$ is contained in the  Lie algebra of the stabilizer of
  $\hy_0s_\halpha\hB/\hB$ follows from Lemma~\ref{lem:Lieisoty0}.

The root $\hy_0\halpha$ belonging to $\hPhi^2$, there exists a second
root $\widehat\beta$ in the fiber $\bar\rho\inv(\rho(-\hy_0\halpha))$.
But, Lemma~\ref{lem:b+b-} implies that $\hbeta$ belongs to $\hy_0\hPhi^+$ and
hence to $\hy_0s_\ha\hPhi^+$.
We deduce that
$$
\lg_{\rho(-\hy_0\halpha)}\subset \lg_{\widehat\beta}\oplus
\lg_{-\hy_0\ha}\subset\Lie(\hy_0s_\ha B (\hy_0s_\ha)\inv).
$$
In particular, $\lg_{\rho(-\hy_0\halpha)}$ is contained in the Lie algebra of the stabilizer of
  $\hy_0s_\halpha\hB/\hB$. 

Now, the last assertion follows by equality of the dimensions.
\end{proof}

\subsection{Compatible $\sl_2$-triples}

Recall that we have fixed an $\sl_2$-triple
$(X_\beta,H_\beta,Y_\beta)$ associated to any positive
root $\beta$ of $G$.

\begin{lemma}
\label{sltriples}
If $\beta \in \Phi^2 \cap \Phi^+$ then there exist $\sl_2$-triples  $( X_{\hbeta^+}, H_{\hbeta^+}, X_{- \hbeta^+} )$ and  $( X_{\hbeta^-}, H_{\hbeta^-}, X_{-\hbeta^-})$  for $\hbeta^+$ and $\hbeta^-$ such that:\\
\begin{itemize}
    \item $X_{\beta}=X_{\beta^+}+X_{\beta^-}; \hspace{0.5cm} H_{\beta}=H_{\beta^+}+H_{\beta^-}; \hspace{0.5 cm} X_{-\beta}=X_{-\beta+}+X_{-\beta^-}$.
    \item $ \exp(tX_{\beta}) = \exp(tX_{\hbeta^+}) \cdot \exp(tX_{\hbeta^-})
    $ for any $t \in \CC$
\end{itemize}

\end{lemma}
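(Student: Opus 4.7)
The plan is to decompose $X_\beta$ and $X_{-\beta}$ along the pair of root spaces provided by \cite[Lemma~4.4]{spherangmin}, to argue by weight considerations that the relevant cross brackets vanish, and then to pin down the normalizations via a short root-string computation.

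By \cite[Lemma~4.4]{spherangmin} (as recalled in the proof of Lemma~\ref{lem:b+b-}), $\lg_\beta$ embeds in $\lg_{\hbeta^+}\oplus\lg_{\hbeta^-}$ with both projections onto the factors nonzero, and the same holds for $\lg_{-\beta}$ inside $\lg_{-\hbeta^+}\oplus\lg_{-\hbeta^-}$. I would define $X_{\hbeta^\pm}\in\lg_{\hbeta^\pm}$ and $X_{-\hbeta^\pm}\in\lg_{-\hbeta^\pm}$ as the unique (nonzero) components of $X_\beta$ and $X_{-\beta}$ in these decompositions; this makes $X_\beta=X_{\hbeta^+}+X_{\hbeta^-}$ and $X_{-\beta}=X_{-\hbeta^+}+X_{-\hbeta^-}$ hold by construction.

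The central weight observation is that neither $\hbeta^++\hbeta^-$ nor $\hbeta^+-\hbeta^-$ is a root of $\hG$. Indeed, $\hbeta^++\hbeta^-$ restricts under $\rho$ to $2\beta$, which is not a root of $G$ since $\Phi$ is reduced; and $\hbeta^+-\hbeta^-$ is a nonzero weight restricting to $0$, which cannot be a root since $T$ is regular in $\hG$, so $\rho$ annihilates no element of $\hPhi$. Hence $\lg_{\hbeta^+\pm\hbeta^-}=0$, which forces the four cross brackets
\[
[X_{\hbeta^+},X_{\hbeta^-}],\quad [X_{-\hbeta^+},X_{-\hbeta^-}],\quad [X_{\hbeta^+},X_{-\hbeta^-}],\quad [X_{\hbeta^-},X_{-\hbeta^+}]
\]
to vanish. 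The commutation of $X_{\hbeta^+}$ and $X_{\hbeta^-}$ then gives the exponential identity of the second bullet directly.

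Expanding $H_\beta=[X_\beta,X_{-\beta}]$ using these vanishings leaves $H_\beta=[X_{\hbeta^+},X_{-\hbeta^+}]+[X_{\hbeta^-},X_{-\hbeta^-}]$, where each summand is a scalar multiple $c_\pm\,\hbeta^\pm{}^\vee$ of the corresponding coroot. I would set $H_{\hbeta^\pm}:=\hbeta^\pm{}^\vee$, so that both the $\sl_2$-triple axioms and the middle equality $H_\beta=H_{\hbeta^+}+H_{\hbeta^-}$ reduce to proving $c_\pm=1$. Pairing $\hbeta^+$ against $H_\beta=c_+\hbeta^+{}^\vee+c_-\hbeta^-{}^\vee$ yields $2=\beta(H_\beta)=2c_++c_-\langle\hbeta^+,\hbeta^-{}^\vee\rangle$; since neither $\hbeta^++\hbeta^-$ nor $\hbeta^+-\hbeta^-$ is a root, the $\hbeta^-$-string through $\hbeta^+$ reduces to $\{\hbeta^+\}$, so $\langle\hbeta^+,\hbeta^-{}^\vee\rangle=0$, forcing $c_+=1$; $c_-=1$ follows by the symmetric computation.

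The only step beyond bookkeeping is the orthogonality $\langle\hbeta^+,\hbeta^-{}^\vee\rangle=0$, which is precisely what allows the two $\sl_2$-subalgebras attached to $\hbeta^+$ and $\hbeta^-$ to direct-sum inside $\hlg$ and present the given $\sl_2$-triple for $\beta$ as their diagonal embedding.
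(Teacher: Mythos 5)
Your proof is correct and follows essentially the same route as the paper: decompose $X_\beta$ and $X_{-\beta}$ along the two root spaces, use $\rho(\hPhi)\subseteq\Phi$ to kill all four cross brackets, and pin down the normalizing constants via $\beta(H_\beta)=2$ and the orthogonality $\langle\hbeta^+,(\hbeta^-)^\vee\rangle=0$. The one small improvement over the paper's own write-up is that you explicitly justify this orthogonality by the root-string argument (neither $\hbeta^+\pm\hbeta^-$ is a root $\Rightarrow$ $\langle\hbeta^+,(\hbeta^-)^\vee\rangle=0$), whereas the paper simply asserts it; you also organize the argument by fixing both components of $X_{\pm\beta}$ at the outset and then checking the $\sl_2$-axioms, rather than fixing generic $\sl_2$-triples and then matching coefficients, but this is cosmetic.
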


\begin{proof}
Since $\lg_{\beta} \subseteq \lg_{\hbeta^+} \oplus \lg_{\hbeta^-}$ and
is not equal to any of the 2 root spaces on the right hand side,
$X_{\beta}=X_{\hbeta^+}+X_{\hbeta^-}$ for nonzero  $
X_{\hbeta^\pm}\in\lg_{\hbeta^\pm}$. Then there exist $\sl_2$-triples
of the form  $( X_{\hbeta^+}, H_{\hbeta^+}, X_{- \hbeta^+} )$ and $( X_{\hbeta^-}, H_{\hbeta^-}, X_{-\hbeta^-})$ \, for $\hbeta^+$ and $\hbeta^-$.
Moreover, $X_{-\beta}= a X_{-\hbeta^+} + b X_{-\hbeta^-}$ with nonzero
constants $a$ and $b$, and 
$$H_{\beta}=[X_{\beta}, X_{-\beta}]=[ X_{\hbeta^+}+X_{\hbeta^-} ,  a X_{-\hbeta^+} + b X_{-\hbeta^-}]= aH_{\hbeta^+} + bH_{\hbeta^-} .$$
For the last equality we use that $[X_{\hbeta^+}, X_{-\hbeta^-}]=0$ and $[X_{\hbeta^-}, X_{-\hbeta^+}]=0$. 
Indeed, if $\hbeta^+ -\hbeta^-$ (reps. $\hbeta^- -\hbeta^+$) was a root in $\hPhi$, then $\rho(\hbeta^+ -\hbeta^-)=0$ (resp. $\rho(\hbeta^- -\hbeta^+)=0$). But this is absurd since $\rho$ sends $\hPhi$ in $\Phi$.\\
Using that $\rho(\hbeta^+)=\rho(\hbeta^-)=\beta$, we get that:
$$ 2 = \beta( H_ {\beta})= \hbeta^+ ( aH_{\hbeta^+} + bH_{\hbeta^-})= a \hbeta^+ ( aH_{\hbeta^+}) =2a  .$$
Where for the second-last inqueality we used that $\hbeta^+$ and $\hbeta^-$ are ortogonal, hence $\hbeta^+(H_{\hbeta^-})=0$. Then $a=1$, and similarly we get $b=1$. This proves the first point.\\
\\
For the second one notice that $[X_{\hbeta^+} , X_{\hbeta^-}]=0$. In fact if $\hbeta^+ + \hbeta^-$ was a root, then $\rho(\hbeta^+ + \hbeta^-)=2 \beta$. But this is absurd since $2 \beta$ is not a root. Hence for any $t \in \CC$,  $\exp(tX_{ \hbeta^+} + tX_{ \hbeta^-})=\exp(tX_{\hbeta^+}) \cdot \exp(tX_{\hbeta^-})$, and the second point follows immediately.
\end{proof}
\begin{remark}
For the proof we could also used \cite[Lemma 4.1]{spherangmin} and \cite[Lemma4.3]{spherangmin} to reduce the problem to the case of $\PSL_2$ diagonally embedded in $\PSL_2 \times \PSL_2$.
\end{remark}

\section{Proof of Theorem~\ref{th:mainintro}}

\subsection{An embedding of the multiplicity space}

Fix dominant weights $\nu\in X(T)^+$ and $\widehat\nu\in X(\hT)^+$.
Observe that $H_0$ is a subgroup of $\hy_0\hB\hy_0\inv$ and $\hy_0\hnu$
is a character of this last group. 
In particular, $\hy_0\hnu$ restricts as a character of $H_0$.

\begin{lemma}
\label{H^0-seminvariants}
  In the $G$-representation $V_\nu^*$, the subspaces
$$
 (V_\nu^*)^{(H_0)_{\hy_0\hnu}} = \{v\in V_\nu^*\,:\, \forall h\in H_0\quad h.v=(\hy_0\hnu)(h)v\}
$$
and 
$$
\{v\in V_\nu^*(\hy_0\hnu)\,:\, \forall \alpha\in
  \Phi^+\cap\Phi^1 \quad X_\alpha\cdot v=0\}
$$
coincide.
\end{lemma}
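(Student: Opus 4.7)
The plan is to reduce the group-theoretic semi-invariance condition to a Lie-algebra condition and then read off the two clauses from the decomposition of $\Lie(H_0)$ provided by Lemma~\ref{lem:Lieisoty0}.

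First, I would invoke Lemma~\ref{lem:isot} to note that $H_0$ is connected. Hence $v\in V_\nu^*$ satisfies $h\cdot v=(\hy_0\hnu)(h)v$ for every $h\in H_0$ if and only if $X\cdot v=d(\hy_0\hnu)(X)v$ for every $X\in \Lie(H_0)$. By Lemma~\ref{lem:Lieisoty0},
$$
\Lie(H_0)=\Lie(T)\oplus\bigoplus_{\alpha\in\Phi^+\cap\Phi^1}\lg_\alpha,
$$
so it suffices to test the condition separately on $\Lie(T)$ and on each $\lg_\alpha$ with $\alpha\in\Phi^+\cap\Phi^1$.

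On $\Lie(T)$ the condition $H\cdot v=d(\hy_0\hnu)(H)v$ is precisely the statement that $v$ lies in the weight space $V_\nu^*(\hy_0\hnu)$, where of course $\hy_0\hnu$ is implicitly restricted to $T$ via $\rho$. For the root spaces, the key observation is that any Lie-algebra character vanishes on the derived subalgebra; more concretely, the subspace $\bigoplus_{\alpha\in\Phi^+\cap\Phi^1}\lg_\alpha$ is the nilpotent radical of $\Lie(H_0)$ (since $H_0$ is the semidirect product of $T$ with its unipotent radical), and any one-dimensional representation of $H_0$ is trivial on this unipotent radical. Thus $d(\hy_0\hnu)(X_\alpha)=0$ and the condition reduces to $X_\alpha\cdot v=0$ for every $\alpha\in\Phi^+\cap\Phi^1$.

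Combining the two, a vector $v\in V_\nu^*$ is $(\hy_0\hnu)$-semi-invariant under $H_0$ exactly when $v\in V_\nu^*(\hy_0\hnu)$ and $X_\alpha\cdot v=0$ for all $\alpha\in\Phi^+\cap\Phi^1$, giving the claimed equality. There is no real obstacle here; the only point needing care is the appeal to connectedness of $H_0$ in passing from the group to the Lie algebra.
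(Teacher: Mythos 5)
Your proof is correct and takes essentially the same route as the paper's. The paper factors $H_0=R^u(H_0)T$ at the group level, notes the $T$-action gives the weight condition, observes the character is trivial on $R^u(H_0)$, and then passes to $\Lie(R^u(H_0))$ via Lemma~\ref{lem:Lieisoty0}; you pass to $\Lie(H_0)$ immediately using connectedness from Lemma~\ref{lem:isot} and split via the same decomposition, which is an equivalent packaging of the same ideas.
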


\begin{proof}
 Since $T$ is a maximal torus of $H_0$,  the first subspace is contained in
  $V_\nu^*(\hy_0\hnu)$.
Furthermore $H_0=R^u(H_0)T$, where $R^u(H_0)$ denotes the unipotent
radical. Hence
$$
 (V_\nu^*)^{(H_0)_{\hy_0\hnu}}=V_\nu^*(\hy_0\hnu)^{R^u(H_0)}=
 V_\nu^*(\hy_0\hnu)^{\Lie(R^u(H))}.
$$
Now, Lemma~\ref{lem:Lieisoty0} allows to conclude.
\end{proof}

\bigskip
Consider on $G/B$  the $G$-linearized line bundle $\Li_\nu$ such that $B$ acts
on the fiber over $B/B$ with weight $-\nu$.
Similarly, define $\Li_\hnu$.
By the Borel-Weyl theorem, the space
$$
H^0(G/B\times\hGB,\Li_\nu\otimes\Li_\hnu)^G\simeq (V_\nu^*\otimes V_\hnu^*)^G
$$
of $G$-invariant sections identifies with $\Mult(\nu,\hnu)$.
The orbit $G.\hy_0\hB/\hB$ being open, the restriction map
$$
H^0(G/B\times\hGB,\Li_\nu\otimes\Li_\hnu)^G\longto H^0(G/B\times
G.\hy_0\hB/\hB,\Li_\nu\otimes\Li_\hnu)^G
$$
is injective. Moreover
$$
H^0(G/B\times G.\hy_0\hB/\hB,\Li_\nu\otimes\Li_\hnu)^G \simeq
H^0(G/B,\Li_\nu)^{(H_0)_{\hy_0\hnu}}
\simeq (V_\nu^*)^{(H_0)_{\hy_0\hnu}}.
$$
For $\varphi\in (V_\nu^*)^{(H_0)_{\hy_0\hnu}}$ and denote by $\tilde\sigma\in
H^0(G/B\times G.\hy_0\hB/\hB,\Li_\nu\otimes\Li_\hnu)^G$ the associated
section. 
To describe the image of
$H^0(G/B\times\hGB,\Li_\nu\otimes\Li_\hnu)^G$ in
$(V_\nu^*)^{(H_0)_{\hy_0\hnu}}$, we have to understand what sections
$\tilde\sigma$ extend, and hence the order of the poles of $\tilde\sigma$
along the divisors of $(G/B\times\hGB)-(G/B\times G.\hy_0\hB/\hB)$.

\subsection{Local description along the divisors}

Fix $\widehat\alpha\in \widehat\Delta$ that is a label of some edge descending
from the open orbit in $\Gamma(\hG/G)$. Then $ D_{\ha} := \overline{G \hy_0
  s_\halpha\hB/\hB}$ is a divisor of $\hGB$. By Lemma~\ref{lem:carddiv} this happens if and only if $\hy_0 s_\halpha \in \hPhi^2$. 
Set $\beta=\pm\rho(\hy_0 \halpha)$ the sign being chosen to get $\beta\in\Phi^+$.

Lemma \ref{lem:b+b-} allows to distinguish two situations that  lead
to two slightly different local descriptions of the corresponding
divisor.

\begin{enumerate}
\item[1)] $\hy_0\widehat\alpha$ is negative. 
Then $\beta=-\rho(\hy_0\widehat\alpha)$ and $\hy_0\widehat\alpha=-\hbeta^-$.

\item[2)] $\hy_0\widehat\alpha$ is positive.
Then $\beta=\rho(\hy_0\widehat\alpha)$ and $\hy_0\widehat\alpha=\hbeta^+$.

\end{enumerate}

\bigskip
Given a root $\alpha$ (resp. $\halpha$) of $G$ (resp. $\hG$), we
denote by $U_\alpha$ (resp. $U_\halpha$) the corresponding subgroup
isomorphic to $(\CC,+)$.
The aim of this section is to describe an
open subset of $\hG/\hB$ intersecting the divisor $D_{\ha}$. In particular we will
prove that $U_{\ha}$ is a common transverse slice to $D_ {\ha}$ at any
point of this subset. Before doing so we need some preparatory work.

\begin{lemma}
\label{geomorbits}
Let $\ha$,  $\beta$ and $D_{\ha}$ be as above. Set $ \Sc(\ha)
:=(\Phi^+ \cap \Phi^2) \setminus \{\beta \}$. 
Index the elements of $ \Sc(\ha)=\{\gamma_1,\dots,\gamma_s\}$.
Then: 
\begin{enumerate}
    \item [1)] If $\hy_0\widehat\alpha$ is negative then  the map
    
   $$
   \begin{array}{cccl}
    i_{\ha}\,: &U^- \times \prod_{\gamma \in \Sc(\ha)} U_{\gamma}
     &\longto&G {\hy_0s_{\ha} \hB / \hB }\\
&(u^-, (u_{\gamma})_{\gamma} )&\longmapsto&(u^-  \displaystyle \prod_{i=1}^s u_{\gamma_i})  \, \hy_0  \, s_{\ha} \hB/ \hB
   \end{array}
$$

is an open immersion. 

\item [2)] If $\hy_0\widehat\alpha$ is positive then  the map
    
   $$
   \begin{array}{cccl}
    i_{\ha}\,: &U^- \times \prod_{\gamma \in \Sc(\ha)} U_{\gamma}
     &\longto&G {\hy_0s_{\ha} \hB / \hB }\\
&(u^-, (u_{\gamma})_{\gamma} )&\longmapsto&s_\beta(u^-  \displaystyle \prod_{i=1}^s u_{\gamma_i}) s_\beta \, \hy_0  \, s_{\ha} \hB/ \hB
   \end{array}
$$

    

is an open immersion.

\end{enumerate}
\end{lemma}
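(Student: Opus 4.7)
The plan is to verify that $i_{\ha}$ is étale at the origin of the source and injective on points; since source and target are irreducible of the same dimension, a Zariski's-main-theorem-style argument then upgrades $i_{\ha}$ to an open immersion.

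\textbf{Dimensions and étaleness.} Let $H_y$ denote the stabilizer of $y := \hy_0 s_{\ha}\hB/\hB$ in $G$. By Lemma~\ref{lem:Lieisoty0} and Lemma~\ref{lem:carddiv}(3),
\[
\Lie(H_y) = \Lie(T) \oplus \bigoplus_{\alpha \in \Phi^+\cap\Phi^1} \lg_\alpha \oplus \lg_{-\rho(\hy_0\ha)},
\]
whence $\dim Gy = \#\Phi^+ + \#(\Phi^+\cap\Phi^2) - 1 = \dim U^- + \#\mathcal{S}(\ha)$. In case 1), where $-\rho(\hy_0\ha) = \beta$, the root decomposition provides the complement $\lu^- \oplus \bigoplus_{\gamma\in\mathcal{S}(\ha)} \lg_\gamma$ of $\Lie(H_y)$ inside $\lg$; hence the differential of $i_{\ha}$ at the origin, being the projection of this complement onto $\lg/\Lie(H_y) \cong T_y(Gy)$, is an isomorphism. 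Case 2) is parallel after reparameterizing $\tilde u^- := s_\beta u^- s_\beta^{-1}$ and $\tilde u_{\gamma_i} := s_\beta u_{\gamma_i} s_\beta^{-1}$, which transports the argument into the opposite Borel $B' := s_\beta B s_\beta^{-1}$; the point of the conjugation in the definition of $i_{\ha}$ is precisely that $H_y \subset B'$ in case 2.

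\textbf{Injectivity.} This is the main obstacle. In case 1), the containment $H_y \subset B$ (from $\Lie(H_y) \subset \lb$) yields a clean reduction: if $u^- \prod u_{\gamma_i} = v^- \prod v_{\gamma_i} \cdot s$ with $s \in H_y$, then $(v^-)^{-1} u^- \in B$, and $U^- \cap B = \{e\}$ forces $u^- = v^-$. The residual relation $(\prod v_{\gamma_i})^{-1} \prod u_{\gamma_i} = s \in R^u(H_y) = H_y \cap U$ is settled via the partition $\Phi^+ = \mathcal{S}(\ha) \sqcup ((\Phi^+\cap\Phi^1) \cup \{\beta\})$. The second part is closed under root addition (being the root set of a Lie subalgebra), so $R^u(H_y) = \prod_{\delta \in (\Phi^+\cap\Phi^1)\cup\{\beta\}} U_\delta$; ordering $\Phi^+$ with $\mathcal{S}(\ha)$ first and invoking uniqueness of the factorization $U \cong \prod_{\delta\in\Phi^+} U_\delta$ gives $u_{\gamma_i} = v_{\gamma_i}$ and $s = e$. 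Case 2) follows the same template with $B$, $U^-$, $U$, $\mathcal{S}(\ha)$ replaced by $B'$, $s_\beta U^- s_\beta^{-1}$, $s_\beta U s_\beta^{-1}$, $s_\beta \mathcal{S}(\ha)$; the required closedness and disjointness of the analogous partition of $s_\beta(\Phi^+)$ persist because $s_\beta$ preserves the decomposition $\Phi = \Phi^1 \sqcup \Phi^2$. With étaleness at the origin, point-injectivity, and the dimension match in hand, $i_{\ha}$ is a quasi-finite dominant morphism that is birational onto its image, hence an open immersion.
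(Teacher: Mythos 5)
Your overall strategy is genuinely different from the paper's. The paper builds the open immersion in one pass: by Lemma~\ref{lem:unip} it identifies $U^-\times\prod_{\gamma\in\Sc(\ha)}U_\gamma\times G_{\hy_0 s_\ha\hB/\hB}\longto G$ with (an open sub-decomposition of) the big cell $U^-\times U\times T\longto G$, and then simply quotients by the stabilizer; no ZMT and no separate injectivity argument is needed. You instead check étaleness at the base point, prove injectivity by hand, and invoke a Zariski-main-theorem-type upgrade. The facts feeding both arguments are the same (Bruhat open cell plus ordered factorization of $U$ by root subgroups relative to a $T$-stable normal subgroup), so your injectivity step essentially re-derives what the paper packages once and for all into Lemma~\ref{lem:unip}. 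Your route is valid in Case~1) and is arguably more elementary, but it is longer and relies on a ``quasi-finite + birational + normal target $\Rightarrow$ open immersion'' black box that the paper avoids.

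In Case~2), however, there is a real gap. You reduce to the opposite Borel $B'=s_\beta B s_\beta^{-1}$, which requires $H_y\subset B'$, i.e., that the roots $(\Phi^+\cap\Phi^1)\cup\{-\beta\}$ of $R^u(H_y)$ all lie in $s_\beta\Phi^+$; equivalently, that $s_\beta$ fixes the set $\Phi^+\cap\Phi^1$. You justify this by saying that $s_\beta$ preserves the decomposition $\Phi=\Phi^1\sqcup\Phi^2$. That only gives $s_\beta\Phi^1=\Phi^1$ and says nothing about signs, and the stronger claim is false for a general positive $\beta\in\Phi^2$: in $C_2$ with $\beta=\varepsilon_1+\varepsilon_2$ one has $s_\beta(2\varepsilon_1)=-2\varepsilon_2<0$, so a positive long root can become negative. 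What makes the claim true \emph{in your situation} is an extra constraint you have not used: $\Lie(H_y)$ is a Lie subalgebra, so its set of roots $(\Phi^+\cap\Phi^1)\cup\{-\beta\}$ is closed under addition. If some $\alpha\in\Phi^+\cap\Phi^1$ satisfied $\scal{\alpha,\beta^\vee}>0$, then $\alpha-\beta$ would be a root; since $\alpha$ is long, $\beta$ is short and $\scal{\alpha,\beta^\vee}=(\alpha,\alpha)/(\beta,\beta)$, a direct length computation gives $(\alpha-\beta,\alpha-\beta)=(\beta,\beta)$, so $\alpha-\beta$ is short and hence not in $(\Phi^+\cap\Phi^1)\cup\{-\beta\}$, contradicting closedness. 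Therefore $\scal{\alpha,\beta^\vee}\leq 0$ for every $\alpha\in\Phi^+\cap\Phi^1$, so $s_\beta\alpha=\alpha-\scal{\alpha,\beta^\vee}\beta$ is positive; combined with $W$-invariance of $\Phi^1$ this gives $s_\beta(\Phi^+\cap\Phi^1)=\Phi^+\cap\Phi^1$. You need to supply this (or an equivalent) argument: the sentence you wrote does not do the work, and without it both the étale computation and the injectivity step in Case~2) are unjustified. (The paper's one-line treatment of Case~2) is also silent on this point, so it is worth stating explicitly.)
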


Our proof of Lemma~\ref{geomorbits} depends on the following
well-known lemma.

\begin{lemma}\label{lem:unip}
  \begin{enumerate}
  \item The product induces an open immersion
$$
\begin{array}{ccl}
  U^-\times U\times T&\longto&G\\
(u^-,u,t)&\longmapsto&u^-ut.
\end{array}
$$
\item Let $H$ be a $T$-stable closed subgroup of $U$. Number
  $\gamma_1,\dots,\gamma_s$ the positive roots of $G$ that are not
  roots of $H$. Then the map
$$
\begin{array}{ccl}
  U_{\gamma_1}\times\cdots\times U_{\gamma_s} \times H&\longto&U\\
((u_i),u)&\longmapsto&u_1\dots u_s\,u
\end{array}
$$
is an isomorphism.
  \end{enumerate}
\end{lemma}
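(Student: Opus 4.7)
Both parts are classical structural facts; I outline the main steps.

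For (1), the plan is to factor the map through the open Bruhat cell. One has $B = U \rtimes T$, so $(u,t)\mapsto ut$ is an isomorphism $U\times T\xrightarrow{\sim} B$. Moreover, $U^-\cap B = \{e\}$ forces the product map $U^-\times B\to G$ to be injective. Its differential at $(e,e)$ is the sum map $\ln^-\oplus\lb\to\lg$, an isomorphism by the triangular decomposition, so the map is dominant and birational. By Zariski's main theorem---applied to this injective, hence quasi-finite, birational morphism to the normal variety $G$---it is an open immersion. Composing with $U\times T\xrightarrow{\sim} B$ yields the statement.

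For (2), the plan is to use $T$-equivariance to reduce to a direct-sum decomposition of Lie algebras. Since $H$ is $T$-stable, $\Lie(H)\subset\ln = \bigoplus_{\gamma\in\Phi^+}\lg_\gamma$ is a $T$-stable subspace, hence of the form $\bigoplus_{\delta\in\Phi_H}\lg_\delta$ with $\Phi_H = \Phi^+\setminus\{\gamma_1,\dots,\gamma_s\}$; being a Lie subalgebra, $\Phi_H$ is closed under root addition inside $\Phi^+$ (via $[\lg_{\delta_1},\lg_{\delta_2}]\subset\lg_{\delta_1+\delta_2}$). In characteristic zero every unipotent algebraic group is connected, so $H$ itself is connected. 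I would then invoke the standard structural result that, for any enumeration of a closed subset $\Psi\subset\Phi^+$ compatible with the height filtration, the product of the corresponding root subgroups gives a variety-isomorphism onto the unique closed connected $T$-stable subgroup of $U$ with Lie algebra $\bigoplus_{\gamma\in\Psi}\lg_\gamma$ (proved via Baker--Campbell--Hausdorff). Applying this to $\Phi^+$---ordered so that $\gamma_1,\dots,\gamma_s$ come first, followed by a height-compatible listing of $\Phi_H$---and separately to $\Phi_H$ alone, one factors the claimed product map as
\[
\prod_{i=1}^{s} U_{\gamma_i}\,\times\,\prod_{\delta\in\Phi_H} U_\delta
\;\longrightarrow\;
\prod_{i=1}^{s} U_{\gamma_i}\,\times\, H
\;\longrightarrow\; U,
\]
a composition of two isomorphisms.

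The main obstacle is genuinely delivering the product decomposition of $H$ itself, namely $\prod_{\delta\in\Phi_H} U_\delta\xrightarrow{\sim} H$. In characteristic zero this comes from the fact that $\exp\colon\ln\to U$ is a variety-isomorphism together with Baker--Campbell--Hausdorff: an iterated product $u_{\delta_1}(t_1)\cdots u_{\delta_k}(t_k)$ of root-subgroup elements equals $\exp(X)$ for a unique $X\in\Lie(H)$, depending polynomially on the $t_i$ via a system triangular in the height filtration, hence invertible. Everything else reduces to matching dimensions and applying Zariski's main theorem.
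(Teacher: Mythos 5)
Your outline is sound and more self-contained than the paper's treatment, which simply cites Humphreys: part~(1) is attributed there to the Bruhat decomposition (\cite[Section~28.5]{Hum}) and part~(2) to \cite[Section~28.1]{Hum}, on $T$-stable closed subgroups of a maximal unipotent being directly spanned by the root subgroups they contain. Your proof of (1), via injectivity of $U^-\times B\to G$, the differential at the identity being the triangular decomposition $\ln^-\oplus\lb\to\lg$, and Zariski's main theorem applied to a quasi-finite birational morphism into the normal variety $G$, is correct and avoids invoking the full Bruhat machinery.

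In part (2) there is, however, a small but real gap in the chain as written. You state the product-decomposition result only for enumerations of a closed subset $\Psi\subset\Phi^+$ that are \emph{compatible with the height filtration}, and you then apply it to $\Phi^+$ enumerated as $\gamma_1,\dots,\gamma_s$ followed by a height-ordered listing of $\Phi_H$. That enumeration of $\Phi^+$ is in general not height-compatible (some $\gamma_i$ may well have larger height than some root of $\Phi_H$), so the result in the form you cited does not cover it. The fix is cheap: the standard statement, and indeed your own BCH computation, hold for \emph{any} enumeration. The point is that $[\lg_\delta,\lg_{\delta'}]\subset\lg_{\delta+\delta'}$ always lands in strictly larger height regardless of which factor comes first, so the coefficient of $X_\gamma$ in $\log\bigl(u_{\delta_1}(t_1)\cdots u_{\delta_k}(t_k)\bigr)$ equals the relevant $t_i$ plus a polynomial in variables indexed by roots of smaller height; this change of coordinates is triangular in height, not in the order of the factors, hence invertible for any ordering. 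Restating the intermediate lemma for arbitrary orderings --- which is exactly the form in Humphreys, proved there by induction on $\dim U$ via commutator relations and valid in all characteristics, whereas your BCH route is specific to characteristic zero --- closes the gap and gives the factorization you want.
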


\begin{proof}
  The first assertion is a part of the Bruhat decomposition (see
  e.g. \cite[Section~28.5]{Hum}). The second assertion is the content
  of \cite[Section~28.1]{Hum}.
\end{proof}

\bigskip
    
\begin{proof}We begin by the negative case. Notice that Lemma~\ref{lem:carddiv} implies that $\gamma \in \Sc(\ha)$ if and only if $U_\gamma \not \subseteq U_{\hy_0 s_\ha \hB/ \hB}$.
Applying the second assertion of Lemma~\ref{lem:unip} with $H=U_{\hy_0
  \, s_{\ha} \hB/ \hB}$, one gets an isomorphism 
$$
\prod_{\gamma \in \Sc(\ha)} U_{\gamma}\times U_{\hy_0
  \, s_{\ha} \hB/ \hB}\longto U.
$$
Now, again by Lemma~\ref{lem:carddiv}, $G_{\hy_0
  \, s_{\ha} \hB/ \hB}=T U_{\hy_0
  \, s_{\ha} \hB/ \hB}
$. Then the first assertion of Lemma~\ref{lem:unip} implies that 
$$
U^-\times \prod_{\gamma \in \Sc(\ha)} U_{\gamma}\times G_{\hy_0
  \, s_{\ha} \hB/ \hB}\longto G
$$
is an open embedding.
Taking the quotient by $G_{\hy_0
  \, s_{\ha} \hB/ \hB}$ we conclude.\\

The second case is obtained similarly by applying Lemma~\ref{lem:unip}
to the maximal unipotent subgroups $s_\beta U^\pm s_\beta$ in place of $U^\pm$. 
\end{proof}

\bigskip
\begin{prop}
\label{charts}
Keep the  setting as in Lemma~\ref{geomorbits}. 
\begin{enumerate}
    \item [1)] If $\hy_0\widehat\alpha$ is negative then  the map:
    
   $$\begin{array}{cccl}
  f_{\ha} \,:&U^- \times \prod_{\gamma \in \Sc(\ha)} U_{\gamma} \times
               U_{-\ha}&\longto&\hG/\hB\\
&(u^-, (u_{\gamma})_{\gamma} , u_{-\ha})&\longmapsto&(u^-  \displaystyle \prod_{i=1}^s u_{\gamma_i})  \, \hy_0 s_{\ha}  u_{-\ha} \,  \hB  
\end{array}
$$
is an open immersion. 
Furthermore the image of $f_\ha$ is contained in $G \hy_0 \hB/ \hB \cup D_\ha$ , and\, $ f_{\ha} \inv (D_{\ha})= U^-\times \prod_{\gamma \in \Sc(\ha)} U_{\gamma} \times\{1 \}$.  

 \item [2)] If $\hy_0\widehat\alpha$ is positive then  the map:
    
   $$\begin{array}{cccl}
  f_{\ha} \,:&U^- \times \prod_{\gamma \in \Sc(\ha)} U_{\gamma} \times
               U_{-\ha}&\longto&\hG/\hB\\
&(u^-, (u_{\gamma})_{\gamma} , u_{-\ha})&\longmapsto&s_\beta(u^-  \displaystyle \prod_{i=1}^s u_{\gamma_i
})  \, s_\beta \hy_0s_\ha u_{-\ha} \,  \hB  
\end{array}
$$
is an open immersion. 
Furthermore the image of $f_\ha$ is contained in $G \hy_0 \hB/ \hB \cup D_\ha$, and \, $ f_{\ha} \inv (D_{\ha})= U^-\times \prod_{\gamma \in \Sc(\ha)} U_{\gamma} \times\{1 \}$.  
    
\end{enumerate}
\end{prop}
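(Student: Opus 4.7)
The statement has three components to verify: (a) $\mathrm{Im}(f_\ha) \subseteq G\hy_0\hB/\hB \cup D_\ha$; (b) $f_\ha^{-1}(D_\ha) = U^- \times \prod_{\gamma}U_\gamma \times \{1\}$; and (c) $f_\ha$ is an open immersion. The plan is to view $f_\ha$ as a fiberwise extension of the open immersion $i_\ha$ from Lemma~\ref{geomorbits} along the $\PP^1$-bundle $\pi_\ha\,:\,\hGB\to\hG/P_\ha$, and to reduce everything to properties of a single fiber.

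The starting observation is that $s_\ha u_{-\ha} \in P_\ha$, so $\pi_\ha \circ f_\ha$ is independent of $u_{-\ha}$ and coincides with $(\pi_\ha \circ i_\ha) \circ \mathrm{pr}$, where $\mathrm{pr}$ forgets the $U_{-\ha}$-factor. By Lemma~\ref{lem:carddiv}, the hypothesis on $\ha$ is equivalent to $\hy_0\ha \in \hPhi^2$, which means that $\ha$ raises $D_\ha$ to the open orbit in $\Gamma(\hG/G)$. The definition of ``raises'' then yields $\pi_\ha^{-1}(\pi_\ha(D_\ha)) = D_\ha \cup G\hy_0\hB/\hB$, giving (a), as well as $\pi_\ha^{-1}(\pi_\ha(x)) \cap V = \{x\}$ for every $x$ in the open orbit $V := G\hy_0 s_\ha\hB/\hB$. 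Applied to $x = g\hy_0 s_\ha\hB$, with $g = u^-\prod u_{\gamma_i}$ in case 1 (or its $s_\beta$-conjugate in case 2), the latter property reduces (b) to the equivalence $u_{-\ha}\hB = \hB \iff u_{-\ha} = 1$, which holds since $U_{-\ha} \cap \hB = \{1\}$.

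For (c), I would construct an explicit algebraic inverse to $f_\ha$. The restriction $\pi_\ha|_V$ is a $G$-equivariant bijection onto the $G$-orbit $G\hy_0 P_\ha/P_\ha$, which is open in $\hG/P_\ha$ since the smooth surjection $\pi_\ha$ sends the open $G$-orbit of $\hGB$ to it. A bijective $G$-equivariant morphism $G/H_1 \to G/H_2$ of homogeneous spaces forces $H_1 = H_2$ (the fiber $H_2/H_1$ is a single point), hence is an isomorphism in characteristic zero; consequently $\pi_\ha|_V$ is an isomorphism and $\pi_\ha \circ i_\ha$ is an open immersion. Separately, $u_{-\ha} \mapsto s_\ha u_{-\ha}\hB$ identifies $U_{-\ha}$ with the big Bruhat cell $U_\ha s_\ha\hB/\hB$ of $P_\ha/\hB \simeq \PP^1$, whose complement is the single point $\hB/\hB$; translating by $g\hy_0$, the restriction of $f_\ha$ to each fiber $\{(u^-,u_\gamma)\}\times U_{-\ha}$ is an isomorphism onto the complement of the single point $g\hy_0\hB$ in the $\pi_\ha$-fiber through $g\hy_0 s_\ha\hB$. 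An algebraic inverse to $f_\ha$ is then obtained by first recovering $(u^-,u_\gamma)$ via $(\pi_\ha \circ i_\ha)^{-1}\circ\pi_\ha$ and then reading off $u_{-\ha}$ from the fiber position. The image of $f_\ha$ is $\pi_\ha^{-1}(\mathrm{Im}(\pi_\ha \circ i_\ha))$ minus a closed algebraic section, hence open in $\hGB$.

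The two cases of the proposition differ only by the outer conjugation by $s_\beta \in G$ introduced in Lemma~\ref{geomorbits}, which is transparent to every step above. The main delicate point is the upgrade of the injectivity of $\pi_\ha|_V$ to the isomorphism property, which I intend to handle by the structural comparison of stabilizers just sketched.
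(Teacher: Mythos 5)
Your proof is correct and takes essentially the same route as the paper: both view $\hGB$ as a $\PP^1$-bundle over $\hG/P_\ha$ via $\pi_\ha$, reduce the openness of $f_\ha$ to the open immersion $i_\ha$ of Lemma~\ref{geomorbits} together with the identification of each $\pi_\ha$-fiber with $\PP^1$ minus a point, and construct the explicit inverse by splitting into the $\hG/P_\ha$ component and the fiber coordinate. The paper packages this through the fibered-product presentation $\hGB \simeq \hG\times_{P_\ha}P_\ha/\hB$ and writes out the inverse in coordinates, whereas you appeal more directly to the ``raising'' property of $\ha$ in $\Gamma(\hG/G)$ for parts (a) and (b) and to the general fact that a bijective $G$-equivariant morphism of homogeneous spaces is an isomorphism for part (c) — a slightly more structural phrasing of the same isotropy comparison the paper uses when it asserts that $\hy_0 s_\ha\hB/\hB$ and $\pi_\ha(\hy_0 s_\ha\hB/\hB)$ have the same stabilizer.
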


\smallskip
\begin{proof} We prove part 1). The proof of the positive case is obtained from the following one by replacing every appearance of $\hy_0$ with $s_{\beta} \hy_0$.
Identify $\hGB$ with the fibered product $\hG \times_{P_\ha} P_\ha / \hB$. Via this identification $f_\ha$ is the map:
 $$\begin{array}{cccl}
  f_{\ha} \,:&U^- \times \prod_{\gamma \in \Sc(\ha)} U_{\gamma} \times
               U_{-\ha}&\longto&\hG \times_{P_\ha} P_\ha / \hB\\
&(u^-, (u_{\gamma})_{\gamma} , u_{-\ha})&\longmapsto&\bigl[ (u^- \displaystyle  \prod_{i=1}^s u_{\gamma_i} \hy_0 s_{\ha} \, : \, u_{-\ha} \hB / \hB)  \bigr].
\end{array}
$$
Since $ \hy_0 s_\ha \hB/ \hB $  and $ \pi_\ha(\hy_0 s_\ha \hB/ \hB)$
have the same isotropy in $G$, we can identify their $G$-orbits.
In particular we will think about $i_\ha$, the open immersion of Lemma~\ref{geomorbits}, as a map to $G.\hy_0 s_{\ha}P_\halpha /P_\halpha \subseteq \hG / P_\ha$. Call $V_{\ha} \subseteq \hG/ P_\ha$ the image of $i_{\ha}$. By Lemma~\ref{geomorbits}, $V_{\ha}$ is open in $\hG / P_\ha$.
Denote the two components of 
$$
  i_{\ha} \inv : V_{\ha} \longrightarrow U^- \times  \prod_{\gamma \in
    \Sc(\ha)} U_{\gamma},
$$ 
by $  j_1  :  V_{\ha} \longrightarrow U^-$\, and $j_2  : V_{\ha}
\longrightarrow  \prod_{\gamma \in \Sc(\ha)} U_{\gamma}$, respectively.

Observe that  $U_{-\ha} \simeq U_{-\ha} \hB / \hB = P_\ha / \hB \setminus (\hB / \hB)
$ is open in $ P_\ha/ \hB $. The image of $f_\ha$ is
$$
\Omega _{\ha}:= \{ \, [ (\widehat g : x )] \in \hG \times_{P_\ha} P_\ha / \hB \, : \,[ \widehat g ] \in V_{\ha} \,
\text{and} \,  s_{\ha} \inv \hy_{0} \inv j_2( [\widehat g]) )\inv j_1( [\widehat g ])) \inv g x
\in U_{-\ha} \hB / \hB \},
$$
where $[\widehat g]=\widehat g P_\halpha/P_\halpha\in\hG/P_\halpha$.
We deduce that $\Omega_{\ha}$ is open in $\hG \times_{\Pha} \Pha / \hB$.\\
Finally we prove that $f_\ha$ is an isomorphism. 
If we call $\phi_\ha: U_{-\ha} \hB / \hB \longto U_{-\ha}$ the inverse of the natural projection, then it's easy to see that the map
$$ \Omega_{\ha} \, \longto \, U^- \times \prod_{\gamma \in \Sc(\ha)} U_{\gamma} \times
               U_{-\ha}$$
that sends $  \bigl[( \widehat g, x) \bigr] $ \, to:
$$\bigl( (i_{\ha} \inv)_1( [ \widehat g]) \, \, , \, \,  (i_{\ha} \inv)_2( [\widehat g]) \,\, , \,\, \phi_{\ha} \bigl( s_{\ha} \inv \hy_{0} \inv j_2([\widehat g])\inv j_1([\widehat g[) \inv  gx \bigr) \bigr)$$
is the inverse of $f_\ha$.\\
To conclude, notice that for $u_{-\ha} = 1$, $f_{\ha}$ restrict to $i_{\ha}$, hence $\Omega_{\ha} \cap D_{\ha}$ is an open dense subset of the divisor. \\
For $u_{-\ha} \neq 1$, \, $\hy_0 s_{\ha} u_{-\ha} \hB / \hB  \neq \hy_0 s_{\ha}  \hB / \hB$ in $y_0 s_{\ha} \Pha/ \hB$. Hence $\hy_0 s_{\ha} u_{-\ha} \hB / \hB$ has to be a point of the open $G$-orbit, that is the orbit of $\hy_0  \hB / \hB$.\\
We conclude that $f_{\ha}$ maps \, $U^- \times \displaystyle \prod_{\gamma \in \Sc(\ha)} U_{\gamma} \times \bigl( U_{- \ha} \setminus \{1 \} \bigr) $ \, in the open $G$-orbit and that $ f_{\ha} \inv (D_{\ha})= U^-\times \prod_{\gamma \in \Sc(\ha)} U_{\gamma} \times\{1 \} 
$.
\end{proof}

\subsection{Conclusion}


In the  setting of the end of Section 3.1, we are now in position to characterize the image of the embedding: 

\begin{equation}
\label{embedd}
    H^0(G/B\times\hGB,\Li_\nu\otimes\Li_\hnu)^G \longto H^0(G/B,\Li_\nu)^{(H_0)_{\hy_0\hnu}}.
\end{equation}

Fix $\varphi \in (V_\nu^*) ^{(H_0)_{\hy_0\hnu}}$ and follow it throught the following isomorphisms: 
$$
\begin{array}{ccccl}
(V_\nu^*) ^{(H_0)_{\hy_0\hnu}}&\simeq&
H^0(G/B,\Li_\nu)^{(H_0)_{\hy_0\hnu}} &\simeq&
H^0(G/B\times G.\hy_0\hB/\hB,\Li_\nu\otimes\Li_\hnu)^G\\
\varphi&\mapsto&\sigma&\mapsto&\tilde\sigma.
\end{array}
$$
Fix $v_0\in V_\nu^{(B)}$ and $\tilde y_0\in(\Li_\hnu)_{\hy_0}-\{0\}$.
Explicitly, $\sigma$ and $\tilde \sigma$ are given by the formulas 
\begin{equation}
  \label{eq:1}
  \begin{array}{c}
\forall g\in G\qquad\sigma(gB/B)=[g:\varphi(gv_0)]\\[1em]
  \forall g_1,g_2\in G\qquad
  \tilde\sigma(g_1B/B,g_2\hy_0\hB/\hB)=[g_1:\varphi(g_2\inv g_1 v_0)]\otimes
  g_2\tilde y_0.
  \end{array}
\end{equation}

 We want to determine when $\ts$ extends to a global section. Take $\widehat\alpha\in \widehat\Delta$ that is a label of some edge descending
from the open orbit. Then $ D_{\ha} := \overline{G \hy_0
  s_\halpha\hB/\hB}$ is a divisor of $\hGB$ along which, we want to
determine the vanishing order of $\ts$.
Consider the image $V_{\ha}$ of the map $\iota_\ha$ defined by Proposition~\ref{charts}. 

\begin{prop}
\label{vanishneg}
\begin{enumerate}
\item Assume that  $\hy_0 \ha$ is negative. Then, the section $\ts$
  extends to a regular section on $G/B \times V_{\ha}$
  if and only if
  $$ X_{\beta}^m \varphi= 0 \, \, \, \text{for} \,\, \, m >  \scal{ - \hy_0 \cdot
  \hnu,(\hbeta^-)^\vee}.$$
\item Assume that  $\hy_0 \ha$ is positive. Then, the section $\ts$
  extends to a regular section on $G/B \times V_{\ha}$ if and only if
$$ X_{-\beta}^m \varphi= 0 \, \, \, \text{for} \,\, \, m > \scal{\hy_0 \cdot \hnu,(\hbeta^+)^\vee}.$$
\end{enumerate}
\end{prop}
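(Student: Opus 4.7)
The plan is to work in the chart $f_\ha$ of Proposition~\ref{charts} and analyse the pulled-back section $\tilde\sigma$ as a Laurent expansion in the parameter $t$, where $u_{-\ha}=\exp(tX_{-\ha})$ is the coordinate transverse to $D_\ha=\{t=0\}$. I trivialise $\Li_\hnu$ along $f_\ha$ by the nowhere-vanishing section $\tilde y(u^-,(u_\gamma),t):= u\,\hy_0\,s_\ha\,\exp(tX_{-\ha})\,\tilde v_0$ in case 1), and $\tilde y:=s_\beta u s_\beta\,\hy_0\,s_\ha\,\exp(tX_{-\ha})\,\tilde v_0$ in case 2), with $u=u^-\prod u_{\gamma_i}$ and $\tilde v_0$ a generator of $(\Li_\hnu)_{\hB/\hB}$; a similar regular trivialisation is used for $\Li_\nu$ on $G/B$. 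Extending $\tilde\sigma$ across $D_\ha$ is then equivalent to regularity in $t$ at $t=0$.

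The core input is the rank-one Bruhat identity in $\langle U_\ha,U_{-\ha}\rangle\subset\hG$: for $t\neq 0$,
$$
s_\ha\,\exp(tX_{-\ha}) = \exp(-t^{-1}X_{-\ha})\cdot\ha^\vee(-t)\cdot\exp(t^{-1}X_\ha).
$$
Translating by $\hy_0$ and using $\Ad(\hy_0)X_{-\ha}=X_{-\hy_0\ha}$ yields $\hy_0 s_\ha\exp(tX_{-\ha})\hB/\hB=\exp(-t^{-1}X_{-\hy_0\ha})\,\hy_0\hB/\hB$, while comparing lifts to $\Li_\hnu$ produces a factor $\hnu(\ha^\vee(-t))=(-t)^{\scal{\hnu,\ha^\vee}}$. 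Next I lift this point to a $G$-translate of $\hy_0\hB/\hB$, where Lemma~\ref{sltriples} is crucial. In case 1), $-\hy_0\ha=\hbeta^-$, and $\exp(-t^{-1}X_\beta)=\exp(-t^{-1}X_{\hbeta^+})\exp(-t^{-1}X_{\hbeta^-})$; since $\hy_0^{-1}\hbeta^+\in\hPhi^+$ the factor $\exp(-t^{-1}X_{\hbeta^+})$ lies in the unipotent radical of $\hy_0\hB\hy_0^{-1}$, so it stabilises $\hy_0\hB/\hB$ and, via commutativity with $\exp(-t^{-1}X_{\hbeta^-})$, acts trivially on $(\Li_\hnu)_{\exp(-t^{-1}X_{\hbeta^-})\hy_0\hB/\hB}$. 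Hence $g_2:=u\exp(-t^{-1}X_\beta)\in G$ represents the chart point and $g_2\tilde y_0=u\exp(-t^{-1}X_{\hbeta^-})\tilde y_0$. Case 2) is symmetric: $-\hy_0\ha=-\hbeta^+$, one uses $\exp(-t^{-1}X_{-\beta})=\exp(-t^{-1}X_{-\hbeta^+})\exp(-t^{-1}X_{-\hbeta^-})$, and now it is $\exp(-t^{-1}X_{-\hbeta^-})$ that stabilises $\hy_0\hB/\hB$ (because $-\hy_0^{-1}\hbeta^-\in\hPhi^+$), giving $g_2=s_\beta u s_\beta\exp(-t^{-1}X_{-\beta})\in G$.

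Feeding $g_2$ into formula (\ref{eq:1}), the form-factor becomes, in case 1), $\varphi(\exp(t^{-1}X_\beta)u^{-1}g_1 v_0)=(\exp(-t^{-1}X_\beta)\varphi)(u^{-1}g_1 v_0)=\sum_{m\geq 0}\tfrac{(-1)^m}{m!}t^{-m}(X_\beta^m\varphi)(u^{-1}g_1 v_0)$, using the dual-action identity $\varphi(\exp(sX)v)=(\exp(-sX)\varphi)(v)$; in case 2), $X_\beta$ is replaced by $X_{-\beta}$ and $u$ by $s_\beta u s_\beta$. The line-bundle factor contributes $(-t)^{\scal{\hnu,\ha^\vee}}$ as above. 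Therefore the pullback of $\tilde\sigma$ in the chosen trivialisation is, up to nonzero constants,
$$
\sum_{m\geq 0}\frac{(-1)^m}{m!}\,t^{\scal{\hnu,\ha^\vee}-m}\,(X_\beta^m\varphi)(u^{-1}g_1 v_0)
$$
in case 1), and the $X_{-\beta}$ analogue in case 2). Regularity at $t=0$ amounts to the vanishing of every term with negative exponent; as the vectors $u^{-1}g_1 v_0$ span $V_\nu$ when $g_1$ varies, this reduces to $X_\beta^m\varphi=0$ (resp.\ $X_{-\beta}^m\varphi=0$) for all $m>\scal{\hnu,\ha^\vee}$. Finally, $W$-invariance gives $\scal{\hnu,\ha^\vee}=\scal{\hy_0\hnu,(\hy_0\ha)^\vee}$, which equals $\scal{-\hy_0\hnu,(\hbeta^-)^\vee}$ in case 1) and $\scal{\hy_0\hnu,(\hbeta^+)^\vee}$ in case 2), matching the statement.

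The main obstacle will be the bookkeeping: pinning down compatible normalisations of the root vectors in the Bruhat identity so that the torus term is exactly $\ha^\vee(-t)$, and propagating the decomposition from Lemma~\ref{sltriples} through the $\Li_\hnu$-trivialisation without polluting the factor $(-t)^{\scal{\hnu,\ha^\vee}}$. The conceptual heart of the argument is the replacement of the \emph{illegal} element $\exp(-t^{-1}X_{\hbeta^-})\in\hG$ by the \emph{legal} $\exp(-t^{-1}X_\beta)\in G$ (resp.\ $\exp(-t^{-1}X_{-\beta})$), which is possible precisely because $[X_{\hbeta^+},X_{\hbeta^-}]=0$ and because one of the two summands stabilises $\hy_0\hB/\hB$.
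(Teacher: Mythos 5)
Your proposal is correct and follows essentially the same route as the paper: reduce to the transverse chart $f_\ha$ of Proposition~\ref{charts}, apply the rank-one Bruhat identity to move $s_\ha\exp(tX_{-\ha})$ through $\hy_0$, use the compatible $\sl_2$-triples of Lemma~\ref{sltriples} to trade the $\hG$-root vector $X_{\hbeta^\mp}$ for the $G$-root vector $X_{\pm\beta}$ (the complementary factor stabilising $\hy_0\hB/\hB$), feed the resulting $g_2\in G$ into formula~\eqref{eq:1}, and read off the pole order of the Laurent expansion in $t$. The only cosmetic differences are that you trivialise the line bundles explicitly and work ``up to nonzero constants'' (your Bruhat identity differs from the paper's \eqref{sl2formula} by the central element $\ha^\vee(-1)$, which is harmless), whereas the paper carries the normalisation constant $c$ through the computation.
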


\begin{proof}
Denote by $\begin{tikzcd}\epsilon_{\pm\beta}:= \,\CC \arrow[r] &
  \lg_{\pm\beta} \arrow[r, "\exp"] & U_{\pm\beta}\end{tikzcd}$ the
additive  one-parameter subgroups, 
and think about $\beta^\vee\,:\,\CC^*\longto T\subset G$ as a
one-parameter subgroup.
These three morphisms glue to give
a group homomorphism $\phi_\beta\,:\,\SL_2(\CC)\longto G$.  
The same notation is used, in the obvious way, also for $\hG$.
Now fix an $\sl_2$-triple $( X_{\ha}, H_{\ha}, X_{-\ha})$ for $\ha$, with $X_{\ha} \in \lg_{\ha}$. 

Suppose first that we are in the negative case, that is $\hy_0 \ha =-\hbeta^-$, where $\beta= - \rho (\hy_0 \ha) $.
Because of Proposition~\ref{charts}, $\ts$ extends to a section on $G/B \times V_{\ha}$ if and only if the map:

$$
\begin{array}{ccl}
  G/B \times  U^- \times \prod_{\gamma \in \Sc(\ha)} U_{\gamma} \times \CC^*
  &\longto&\Li_{\nu} \otimes \Li_{\hnu}\\
(gB/B, u^- , (u_{\gamma})_{\gamma} , t) &
\longmapsto&\ts  \bigl( (gB/B, u^-\prod_{i=1}^su_{\gamma_i} \hy_0s_{\ha}\epsilon_{-\ha}(t)\hB / \hB) \bigr)
\end{array}
 $$
extends at $t=0$. Notice also that  $U^-$ and $U_{\gamma}$ are subgroups of $G$ and $\ts$ is $G$-invariant, hence the function above extend at $t=0$ if and only if the following map, that with a little abuse will still be called $\ts$, extends at ${t=0}$.

\begin{equation}
\label{negcaseext}
\begin{array}{ccl}
  G/B \times \CC^* &\longto& \Li_{\nu} \otimes \Li_{\hnu}\\
 (gB/B , t) &\longmapsto & \ts \bigl( (gB/B \, , \, \hy_0 s_{\ha} \epsilon_{-\ha}(t) \hB / \hB) \bigr).
\end{array}
\end{equation}
Now fix $\sl_2$-triples for $\hbeta^\pm$ as in Lemma~\ref{sltriples},  so that for any $t \in \CC^*$,  $\epsilon_{ \pm \beta}(t)=\epsilon_{ \pm \widehat \beta^+}(t)\epsilon_{\pm \widehat \beta^-}(t)$.

In $\SL_2$, we have
$$
\begin{pmatrix}
  1&t\\0&1
\end{pmatrix}
\,
\begin{pmatrix}
  0&1\\-1&0
\end{pmatrix}=
\begin{pmatrix}
  1&0\\
t\inv&1
\end{pmatrix}
\,
\begin{pmatrix}
  -t&0\\0&-t\inv
\end{pmatrix}
\,
\begin{pmatrix}
  1&-t\inv\\
0&1
\end{pmatrix},
$$
for any $t \in \CC^*$. Hence 

 \begin{equation}
 \label{sl2formula}
     \ea(t)s_{\ha}=\ema(t \inv ) \ha^\vee(-t) \ea(-t \inv).
 \end{equation} 

Where $\ha$ can be replaced by any positive root of $G$ or $\hG$ for which a corresponding $\sl_2$-triple has been fixed. Now take $(gB/B , t) \in G/B \times \CC^*$, then

\begin{align*}
     ( gB/B \, , \, \hy_0 s_{\ha} \ema(t) \hB / \hB)   = &  ( gB/B \, , \, \hy_0  \ea(t) s_{\ha} \hB / \hB) \\
  = &  ( gB/B \, , \, \hy_0  \ema(t \inv) \hB / \hB)\\
  = &( gB/B \, , \, \epsilon_{ \widehat \beta^-}  ( c \inv t \inv) \hy_0 \hB / \hB )\\
\end{align*} 
where $c$ is the nonzero constant satisfying $ \hy_0 X_{\ha}= c X_{-\hbeta^-}$. 
But since $U_{\beta^+} \subset \hy_0 \hB \hy_0 \inv$:
$$  \epsilon_{ \widehat \beta^-}  ( c \inv t \inv) \hy_0 \hB / \hB  =  \epsilon_{ \widehat \beta^-}  ( c \inv t \inv)  \epsilon_{ \widehat \beta^+}  ( c \inv t \inv) \hy_0 \hB / \hB  =  \epsilon_{ \beta}  ( c \inv t \inv) \hy_0 \hB / \hB . $$
Now, Formula~\eqref{eq:1} gives
$$
\tilde\sigma\bigl(( gB/B \, , \, \hy_0 s_{\ha} \ema(t) \hB /
\hB)\bigr)=[g:\varphi(\eb ( -c \inv t \inv)gv_0)]\otimes \eb ( c \inv t
\inv)\tilde y_0.
$$
Moreover,
$$
\begin{array}{lll}
  \eb ( c \inv t
\inv)\tilde y_0&=\epsilon_{\hbeta^-}(c \inv t
\inv) \epsilon_{\hbeta^+}(c \inv t
\inv)\tilde y_0& \ \\
&=\epsilon_{-\hbeta^-}(ct)(s_{\hbeta^-})\inv (\hbeta^-)^\vee(-ct)\epsilon_{-\hbeta^-}(c\inv
  t\inv)\tilde y_0&{\rm\ by\ formula\ \eqref{sl2formula}}\\
&= \epsilon_{-\hbeta^-}(ct)(s_{\hbeta^-})\inv 
  (-ct)^{\scal{-\hy_0\hnu,(\hbeta^-)^{\vee}}}& {\rm\ since\ }U_{-\hbeta^-}\subset
                                    \hy_0\widehat U \hy_0\inv{\rm\ and\
                                    }\\
&&T{\rm\ acts\ with\ weight\ }
                                    -\hy_0\hnu.
\end{array}
$$
Rewrite the term $\varphi(\eb ( -c \inv t \inv)gv_0)$ as
$$
\begin{array}{ll}
  \varphi(\eb ( -c \inv t \inv)gv_0)&=\scal{\eb(c\inv t
                                     \inv)\varphi,gv_0}\\
&=\sum_{n\geq 0} \frac{(c\inv t
                                     \inv)^n}{n!}\scal{X_\beta^n\varphi,gv_0}.
\end{array}
$$
Finally, we get
$$
\tilde\sigma\bigl(( gB/B \, , \, \hy_0 s_{\ha} \ema(t) \hB /
\hB)\bigr)=[g: \sum_{n\geq 0} \pm\frac{(c\inv t
                                     \inv)^{n +\scal{\hy_0\hnu,(\hbeta^-)^{\vee}}}}{n!}\scal{X_\beta^n\varphi,gv_0}]\otimes
                                   (\epsilon_{-\hbeta^-}(ct)(s_{\hbeta^-})\inv\tilde y_0).
$$
The term $\epsilon_{-\hbeta^-}(ct)(s_{\hbeta^-})\inv \tilde y_0$ is regular
on $\CC$.
Hence $\tilde\sigma$ has no pole along $t=0$ if and only if 
$$
\forall n > \scal{ -\hy_0\hnu , (\hb^-)^{\vee} }\Longrightarrow 
\scal{X_\beta^n\varphi,gv_0}=0\in\CC[G].
$$ 
But $Gv_0$ spans $V_\nu$ that is irreducible. As a consequence,
$\scal{X_\beta^n\varphi,gv_0}=0$ if and only if $X_\beta^n\varphi=0$.

\bigskip
Suppose now that we are in the positive case, hence $\hy_0 \cdot \ha= \hbeta^+$ with $\beta= \rho ( \hy_0 \ha)$. The outline of the proof doesn't change. By the same argument of the previous case, $\ts$ extends to a section on $G/B \times V_{\ha}$ if and only if the following map, that will still be called $\ts$, extends at $t=0$.
\begin{equation}
\label{poscaseext}
\begin{array}{ccl}
  G/B \times \CC^* &\longto& \Li_{\nu} \otimes \Li_{\hnu}\\
 (gB/B , t) &\longmapsto & \ts \bigl( (gB/B \, , \, s_{\beta} \hy_0 s_{\ha} \epsilon_{-\ha}(t) \hB / \hB) \bigr)
\end{array}
\end{equation}
Again fix $\sl_2$-triples for $\hbeta^{\pm}$ as in Lemma~\ref{sltriples}. Take $(gB/B , t) \in G/B \times \CC^*$, then

\begin{align*}
     ( gB/B \, , \, s_{\beta} \hy_0 s_{\ha} \ema(t) \hB / \hB)   = & ( gB/B \, , \, s_{\beta}  \hy_0  \ema(t \inv) \hB / \hB)  \\
  = &( gB/B \, , \,  s_{\beta} \epsilon_{- \beta}  ( c \inv t \inv) \hy_0 \hB / \hB )\\
\end{align*} 
where now $c$ is the nonzero constant satisfying $ \hy_0 X_{\ha}= c X_{\hbeta^+}.$ Hence:
$$
\tilde\sigma\bigl(( gB/B \, , \, s_{\beta} \hy_0 s_{\ha} \ema(t) \hB /
\hB)\bigr)=[g:\varphi(\emb ( -c \inv t \inv)(s_{\beta})\inv gv_0)]\otimes s_{\beta}\emb ( c \inv t
\inv)\tilde y_0.
$$
Similarly to the previous computations:
$$
\begin{array}{ll}
  \emb ( c \inv t
\inv)\tilde y_0&=\epsilon_{-\hbeta^+}(c \inv t
\inv) \epsilon_{-\hbeta^-}(c \inv t
\inv)\tilde y_0 \\
&=\epsilon_{\hbeta^+}(ct)s_{\hbeta^+} \epsilon_{\hbeta^+}(c\inv
  t\inv) (\hbeta^+)^\vee(-c\inv t \inv )\tilde y_0\\
&= \epsilon_{\hbeta^+}(ct)s_{\hbeta^+} 
  (-c \inv t \inv )^{\scal{-\hy_0\hnu,(\hbeta^+)^{\vee}}}.
\end{array}
$$
While for the other term

$$
\begin{array}{ll}
  \varphi(\emb ( -c \inv t \inv) s_{\beta} \inv gv_0)&=\scal{\emb(c\inv t
                                     \inv)\varphi, s_{\beta} \inv gv_0}\\
&=\sum_{n\geq 0} \frac{(c\inv t
                                     \inv)^n}{n!}\scal{X_{-\beta}^n\varphi, s_{\beta} \inv gv_0}.
\end{array}
$$
Finally we get
$$
\tilde\sigma\bigl(( gB/B \, , \, s_{\beta} \hy_0 s_{\ha} \ema(t) \hB /
\hB)\bigr)=[g: \sum_{n\geq 0} \pm\frac{(c\inv t
                                     \inv)^{n - \scal{\hy_0\hnu,(\hbeta^+)^{\vee}}}}{n!}\scal{X_{-\beta}^n\varphi, s_{\beta} \inv gv_0}]\otimes
                                   (s_{\beta} \epsilon_{\hbeta^+}(ct)s_{\hbeta^+})\tilde y_0).
$$
By the same argument of the previous case we conclude that $\tilde\sigma$ has no pole along $t=0$ if and only if 
$$
\forall \, n > \scal{ \hy_0\hnu , (\hb^+)^{\vee} }\Longrightarrow 
X_\beta^n\varphi =0.
$$ 
\end{proof}

\begin{remark}
 If $\hy_0 \ha = - \beta^-$, then $\hy_0 \cdot \ha^\vee= -(\widehat \beta^-)^\vee$, hence:
     $$  \scal{ - \hy_0 \cdot \hnu, (\widehat \beta^-)^\vee} = \scal{\hy_0 \cdot \hnu , \hy_0 \cdot \ha^\vee } = \scal{ \hnu , \ha^\vee}. $$
 Similarly, if $\hy_0 \ha = \widehat \beta^+$, then $\hy_0 \cdot \ha^\vee= (\widehat \beta^+)^\vee$, which implies:
  $$  \scal{  \hy_0 \cdot \hnu, (\widehat \beta^+)^\vee} = \scal{\hy_0 \cdot \hnu , \hy_0 \cdot \ha^\vee } = \scal{ \hnu , \ha^\vee}. $$

\end{remark}
Recall that in the introduction we set $\mathcal{D}= \{ \ha  \in \hPhi
\, : \, \rho(\hy_0\ha) \not \in \Phi^1 \}$, for $\ha \in \mathcal{D}$,
let $V_{\ha}$ be as in Proposition~\ref{vanishneg}. We prove the theorem of the introduction.

\begin{proof}[Proof of Theorem 1]
The first condition of the theorem is implied by
Lemma~\ref{H^0-seminvariants}. Then set $V:=  \bigcup_{\ha \in
  \mathcal{D}} G/B \times V_{\ha} $, which is open in $G/B \times
\hGB$. The complement of $V$ is of codimension strictly larger then 1
and $G/B\times\hG/\hB$ is normal, hence the restriction map
$$H^0(G/B \times \hGB, \Li_{\nu} \otimes \Li_{\hnu}) \longto H^0(V, \Li_{\nu} \otimes \Li_{\hnu}) $$
is an isomorphism. Then, from Proposition~\ref{vanishneg} and from the previous remark we deduce that $\varphi \in (V_{\nu}^*)^{(H^0)_{\hy_0\hnu}}$ is in the image of $$ H^0(G/B \times \hGB, \Li_{\nu}\otimes\Li_{\hnu}) \longto (V_{\nu}^*)^{(H^0)_{\hy_0\hnu}} $$
if and only if the second condition of the theorem holds.
\end{proof}

As remarked in the introduction, the conditions of
Theorem~\ref{th:mainintro} are in general redundant. This is linked to
the fact in $\Gamma( \hG / G)$ we may have different edges between the
same vertices. It seems not easy to determine a minimal set of
conditions in a uniform way. The following lemma checks that, for the
tensor product case, a minimal set of conditions is described by the
set: 
$$ \{ ( \alpha , 0) \in \Phi \times \Phi \, : \, \alpha \in \Delta \}. $$

\begin{lemma}
\label{samecondition}
 For $\beta \in \Phi^+$, and $\sigma \in V^*_{\nu}(\hy_0 \cdot \hnu)$, the following are equivalent: 
 \begin{enumerate}
     \item[1)] $X_{\beta} ^ m \cdot \sigma = 0 $ \, \, for \, \, $m \, > \, \scal{ - \hy_0 \cdot
  \hnu,(\hbeta^-)^\vee} .$
     \item[2)] $X_{-\beta} ^ m \cdot \sigma = 0 $ \, \, for \, \, $m \, > \, \scal{ \hy_0 \cdot
  \hnu,(\hbeta^+)^\vee}.$
 \end{enumerate}
\end{lemma}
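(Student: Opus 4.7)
The plan is to reduce the lemma to a standard $\sl_2$-triple calculation. Working in the tensor product setting, $\hG = G \times G$ with $G$ diagonally embedded, $\hT = T \times T$, $\hPhi = \Phi \sqcup \Phi$, and (as indicated in the introduction) $\hy_0 = (e, w_0)$. First I would unwind the data for $\beta \in \Phi^+$: the two lifts in $\hPhi$ are $\hbeta^+ = (\beta, 0)$ (satisfying $\hy_0\inv \hbeta^+ = (\beta,0) \in \hPhi^+$) and $\hbeta^- = (0, \beta)$ (satisfying $\hy_0\inv \hbeta^- = (0, w_0\beta) \in \hPhi^-$). Writing $\hnu = (\nu_1, \nu_2)$ and using that the restriction $\rho\,:\,X(\hT)\to X(T)$ is given by $\rho(\chi_1, \chi_2) = \chi_1 + \chi_2$, one has $\rho(\hy_0 \hnu) = \nu_1 + w_0 \nu_2 =: \mu$, and a short computation yields
$$
a := \scal{\hy_0 \hnu, (\hbeta^+)^\vee} = \scal{\nu_1, \beta^\vee}, \qquad b := \scal{-\hy_0 \hnu, (\hbeta^-)^\vee} = -\scal{w_0\nu_2, \beta^\vee}.
$$
In particular $\sigma$, which lies in $V_\nu^*(\mu)$, has $H_\beta$-weight $k := \scal{\mu, \beta^\vee} = a - b$.

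Next, I would apply $\sl_2$-theory to the triple $(X_\beta, H_\beta, X_{-\beta})$ acting on $V_\nu^*$. Decompose $\sigma = \sum_n \sigma_n$, where each $\sigma_n$ is the component of $\sigma$ supported on the isotypic part of this $\sl_2$ of highest weight $n$, and is still a weight vector of $H_\beta$-weight $k$. Elementary $\sl_2$-representation theory then gives
$$
X_\beta^m \sigma_n = 0 \iff m > \tfrac{n-k}{2}, \qquad X_{-\beta}^m \sigma_n = 0 \iff m > \tfrac{n+k}{2}.
$$
Consequently, condition (1) of the lemma holds iff every nonzero $\sigma_n$ satisfies $(n-k)/2 \leq b$, i.e.\ $n \leq 2b + k = a + b$; and condition (2) holds iff $(n+k)/2 \leq a$, i.e.\ $n \leq 2a - k = a + b$. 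Both conditions therefore cut out the same subspace of $V_\nu^*(\mu)$ — namely, the sum of those isotypic components of highest $H_\beta$-weight at most $a + b$ — and hence are equivalent.

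The only genuine obstacle is the initial bookkeeping: identifying $\hbeta^\pm$ correctly for the choice $\hy_0 = (e, w_0)$ and verifying the two pairings so that the symmetric identity $\scal{\mu, \beta^\vee} = a - b$ falls out. Once this matching is in place, the equivalence is an immediate consequence of the $\sl_2$-fact recalled above, and in fact the argument works for any $\beta \in \Phi^+$, not only for simple roots.
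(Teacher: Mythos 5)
Your argument is correct and follows essentially the same line as the paper: decompose $V_\nu^*$ into $\sl_2$-isotypic pieces for the triple $(X_\beta, H_\beta, X_{-\beta})$, note that $\sigma$ is a weight vector of $H_\beta$-weight $k = \scal{\rho(\hy_0\hnu), \beta^\vee}$, use the standard $\sl_2$ criterion $X_\beta^m\sigma_n = 0 \iff m > (n-k)/2$ (and its mirror), and observe that both conditions cut out the same range of highest weights $n$ because of the identity $k = a - b$ with $a = \scal{\hy_0\hnu,(\hbeta^+)^\vee}$, $b = \scal{-\hy_0\hnu,(\hbeta^-)^\vee}$.

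The one difference is that you establish $k = a - b$ only in the tensor product case, by unwinding $\hy_0 = (e, w_0)$, $\hbeta^+ = (\beta,0)$, $\hbeta^- = (0,\beta)$ and computing pairings directly. The paper instead appeals to Lemma~\ref{sltriples}, which gives $H_\beta = H_{\hbeta^+} + H_{\hbeta^-}$ for any $\beta \in \Phi^2 \cap \Phi^+$ in any spherical pair of minimal rank; pairing with $\hy_0\hnu$ yields $k = a - b$ in full generality. Since the lemma's hypotheses (a general $\beta \in \Phi^+$ with the $\hbeta^\pm$ notation) and the paper's proof are not restricted to the tensor product case, you should replace your tensor-product verification of the key identity with a citation to Lemma~\ref{sltriples} to obtain the stated generality; the rest of your argument then goes through unchanged.
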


\begin{remark}
If $\beta \in \Phi ^+$ and $\ha_1 \, , \ha_2 \in \widehat \Delta$ satisfy: $$\hy_0 \cdot \ha_1 = - \hbeta^- \, \, \text{and} \, \, \, \hy_0 \cdot \ha_1 =  \hbeta^+$$
then $\hy_0  s_{\ha_1}= s_{\beta} \hy_0  s_{\ha_2}  $, hence their
$G$-orbit is the same, and by the proof we have done we expect that
$\ha_1$ and $\ha_2$ give the same condition in
Theorem~\ref{th:mainintro}. This is directly checked in previous lemma.
\end{remark}

\begin{proof}
Call $\sl_{\beta} $ the subalgebra of $\lg$ spanned by $ X_{\beta},
H_{\beta}, X_{-\beta}$. Decompose $V^*_{\nu}$ into a direct sum of
$\sl_{\beta} $ irreducible representations: $ V^*_{\nu} = \bigoplus
V_{\delta} $.
Write $\sigma = \sum_\delta \sigma_{\delta}$ accordingly to this
decomposition. 
Observe that $\sigma_{\delta} \in V_{\delta}(\hy_0 \cdot \hnu_{| \CC H_{\beta}}).$
Since $V_{\delta}$ is an $\sl_2$ irreducible representation, if
$\sigma_{\delta} \neq 0$, then: 
$$
\begin{array}{ll}
X_{\beta}^m \cdot \sigma_{\delta} = 0 \, \iff \, \scal{  \hy_0 \cdot
  \hnu,\beta^\vee} + 2m > \scal{ \delta ,\beta^\vee} $$,&{\rm and}\\[0.8em]
X_{-\beta}^m \cdot \sigma_{\delta} = 0 \, \iff \,  \scal{  \hy_0 \cdot \hnu,\beta^\vee} - 2m < - \scal{ \delta ,\beta^\vee}.\\
\end{array}
$$
Hence condition 1) is equivalent to: 
$$  \forall \, \delta: \sigma_\delta \neq 0, \, \Longrightarrow \,  \scal{  \hy_0 \cdot \hnu,\beta^\vee} -2 \scal{\hy_0 \cdot \hnu , (\widehat \beta ^-)^\vee } +2 > \scal{ \delta, \beta^\vee}. $$
Similarly condition 2) is equivalent to:
$$  \forall \, \delta: \sigma_\delta \neq 0, \, \Longrightarrow \,   \scal{  \hy_0 \cdot \hnu,\beta^\vee} -2 \scal{\hy_0 \cdot \hnu , (\widehat \beta ^+)^\vee } -2 < - \scal{ \delta, \beta^\vee}. $$
But Lemma~\ref{sltriples} implies that $ \scal{  \hy_0 \cdot \hnu,\beta^\vee}= \scal{  \hy_0 \cdot \hnu,(\widehat \beta^-)^\vee}+\scal{  \hy_0 \cdot \hnu,(\widehat \beta^+)^\vee} $, then we easily conclude that 1) and 2) are equivalent.
\end{proof}

\section{Explicit description on the examples}
\label{sec:expl}

For each example, we determine a working $\hy_0$ and a set of simple
roots parametrizing the $G$-stable divisors of $\hGB$.
With the notation of the introduction, we give a subset $\Dc_0$ of
$\Dc$ such that the map $\ha\longmapsto \overline{G\hy_0s_\ha \hB / \hB}$
is a bijection from $\Dc_0$ onto the set of $G$-stable divisors.

\subsection{Tensor product case}

Here $\hG=G\times G$, $\hT=T\times T$ and $\hB=B\times B$. 
Moreover $\Phi^1$ is empty,  $X( \widehat T)= X(T) \times X(T)$ and $\rho( \lambda, \mu)=\lambda+ \nu$.
Set $\hy_0=(e,w_0)\in\hW=W\times W$.
It is clear that $G\hy_0$ is open in $\hGB$ and that
$\ell(\hy_0)=\dim(\hGB)-\dim(G/B)$.
By the Bruhat decomposition $\{G(B/B,w_0s_\alpha
B/B)\,:\,\alpha\in\Delta\}$ is the collection of codimension one
$G$-orbits in $\hG/\hB$. 
In particular the set 
 $$
\Dc_0^-=\{(0,\alpha)\,:\,\alpha\in\Delta\}
$$ 
works. Another possible choice is $$ \Dc_0^+=\{(\alpha, 0)\,:\,\alpha\in\Delta\}. $$
Note that $\hy_0=\hy_0 \inv $ and that, for $\alpha \in \Delta$,\,  $\hy_0  (0, \alpha)=( 0 , w_0 \alpha) \in \hPhi^-$, while $\hy_0 ( \alpha, 0)=( \alpha, 0 ) \in \hPhi^+$. So, according to our notations, $(0, \alpha)= \ha^-$ and $(\alpha, 0)=\ha^+$.
If $ \nu \in X(T)^+$ and $\lambda , \mu \in X(T)$ we define two sets:
$$
 V^+( \nu, \lambda , \mu):= \{ v \in V_\nu(\lambda) \, : \, \forall \, \alpha \in \Delta \,, \,  X_\alpha^m v= 0 \, \, \text{for} \, \,  m > \scal{ \mu , \alpha^\vee} \}  $$
$$ 
V^-( \nu, \lambda , \mu):= \{ v \in V_\nu(\lambda) \, : \, \forall \, \alpha \in \Delta \,, \,  X_{-\alpha}^m v= 0 \, \, \text{for} \, \,  m > \scal{ \mu , \alpha^\vee} \}  $$

From now on, fix $\nu, \nu_1 , \nu_2 \in X(T)^+$ and set $\hnu=(\nu_1,
\nu_2) \in X(T \times T)^+$. We denote by $\nu^*:= -w_0\nu$, so that $V_{\nu^*} \simeq V_\nu^*$.
In Theorem 2.1 of \cite{PRV} the autors realized, by algebraic methods isomorphisms:
\begin{equation}
    \label{PRVminus}
\Mult (\nu, \hnu^*) \simeq V^-( \nu , \nu_1 - \nu_2^* , \nu_1)
 \end{equation}
 \begin{equation}
     \label{PRVplus}
 \Mult (\nu, \hnu^*) \simeq V^+( \nu , \nu_2 - \nu_1^* , \nu_1^*)
\end{equation}

We explain how this can be recovered from Theorem~\ref{th:mainintro}, using $\Dc_0^+$ and $\Dc_0^-$ as parametrizations of the $G$-stable divisors.
Notice that,  in this case, the conditions 1 of the theorem are empty. Then $\rho (\hy_0 \hnu)= (\nu_1 - \nu_2^*)$ and for $\halpha^+=(\alpha, 0) \in \Dc_0^+$, $\rho(\hy_0 \ha^+)= \alpha $ and $\scal{\hnu, (\ha)^\vee}= \scal{ \nu_1 , \alpha^\vee}$. Hence, from Theorem~\ref{th:mainintro}, we deduce that $$ \Mult( \nu^*, \hnu) \simeq V^-( \nu, \nu_1 - \nu_2^*, \nu_1).$$
Since $\Mult(\nu, \hnu^*)$ is  isomorphic to $\Mult( \nu^*, \hnu)$, we recover \eqref{PRVminus}.\\
Now, $\rho(\hy_0 (0, -w_0 \alpha))= -\alpha$, and $ \scal{\hnu, (0, -w_0 \alpha)^\vee}= \scal{ \nu_2, (-w_0 \alpha)^\vee}= \scal{ \nu_2^*, \alpha^\vee}$. Then, if we use $\Dc_0^-$ to get a minimal number of conditions in the second point of the theorem, we deduce that: $$ \Mult (\nu^*, \hnu) \simeq V^+(\nu, \nu_1-\nu_2^*, \nu_2^*).$$
And since $\Mult(\nu, \hnu^*) \simeq
\Mult (\nu^*, \hnu) \simeq \Mult(\nu^*, (\nu_2, \nu_1))$, we recover
\eqref{PRVplus}.

\subsection{$\Sp_{2n}$ in $\SL_{2n}$}

Fix $n\geq 2$.
Let $V$ be a $2n$-dimensional vector space 
with fixed basis $\base=(e_1,\dots,e_{2n})$.
Consider the following matrices
\begin{eqnarray}
  \label{eq:defJn}
J_n=\left(
  \begin{array}{ccc}
    &&1\\
&\revddots\\
1
  \end{array}
\right);
\qquad{\rm and}\qquad
\omega=\left(
  \begin{array}{cc}
   0 &J_n\\
-J_n&0
  \end{array}
\right).\label{eq:defsympform}
\end{eqnarray}
of size $n\times n$ and $2n\times 2n$ respectively.
View $\omega$ as a symplectic bilinear form of $V$.  

Let $G$ be the associated symplectic group.
Set $T=\{\diag(t_1,\dots,t_{n},t_{n}^{-1},\dots,t_1^{-1})\,:\,t_i\in\CC^*\}$.
Let $B$ be the Borel subgroup of $G$ consisting of upper triangular matrices of $G$.
For $i\in [1,n]$, let $\varepsilon_i$ denote the character of $T$ that maps 
$\diag(t_1,\dots,t_{n},t_{n}^{-1},\dots,t_1^{-1})$ to $t_i$; then 
$X(T)=\oplus_i\ZZ\varepsilon_i$.
Here
$$
\begin{array}{l}
  \Phi^+=\{\varepsilon_i\pm\varepsilon_j\,:\,1\leq i<j\leq n\}\cup 
\{2\varepsilon_i\,:\,1\leq i\leq n\}, \\
\Delta=\{\alpha_1=\varepsilon_1-\varepsilon_2,\,\alpha_2=\varepsilon_2-\varepsilon_3,\dots,\,
\alpha_{n-1}=\varepsilon_{n-1}-\varepsilon_{n},\,\alpha_{n}=2\varepsilon_{n}\},
  {\rm\ and}\\
X(T)^+=\{\sum_{i=1}^{n}\lambda_i \varepsilon_i\,:\,
  \lambda_1\geq\cdots\geq \lambda_{n}\geq 0\}.
\end{array}
$$
For $i\in [1;2n]$, set $\overline{i}=2n+1-i$.
The Weyl group $W$ of $G$ is a subgroup of the Weyl group $S_{2n}$ of
$\hG=\SL(V)$. More precisely
$$
W=\{w\in S_{2n}\,:\,w(\overline{i})=\overline{w(i)} \ \ \forall i\in [1;2n]\}.
$$

Note that $
\dim G=\dim \GL_{2n}(\CC)-\dim\wedge^2V^*=2n^2+n
$ and
$
\dim \hGB=n(2n-1).
$
Hence we are looking for $\hy_0\in \hW$ such that
$$
\dim G_{\hy_0\hB/\hB}=2n.
$$
This is consistent with the fact that $\sharp\Phi_\lh^1=n$
($\Phi_\lh^1=\{2\varepsilon_i\,:\,1\leq i\leq n\}$).

An element $\hw\in \hW$ is written as a word $[\hw(1)\,\hw(2)\,\dots
\hw(2n)]$. Set 
$$
\hy_0=[1\,\bar 1\,2\,\bar 2\dots].
$$
Since the $\omega$-orthogonal of $\scal{e_1,e_{\bar 1},\dots,e_{\bar
  k}}$ is  $\scal{e_{k+1},\dots,e_{\bar
  n}}$ the stabilizer of $\hy_0\hB/\hB$ is diagonal by blocks with 2 blocks
of size $2k$ and $2(n-k)$ (in the basis ordered according to $\hy_0$).
Since this is true for any $k$, this stabilizer consists in block
diagonal matrices with blocks of size 2.
Moreover, in each block the matrix has to be triangular because of its
action on $e_1,\dots,e_n$.
Moreover, these blocks belong to $\Sp(2)=\SL(2)$. We just proved that
$G_{\hy_0\hB/\hB}$ is contained in a group of dimension $2n$. 
For dimension reasons, we deduce that $G_{\hy_0\hB/\hB}$ is equal to
this subgroup and that $G. \hy_0\hB/\hB$ in open in $\hGB$.\\

The length of $\hy_0$ is the number of inversions, that is the set of
pairs $(i<j)$ such that $j$ occurs before $i$ in the word.
Fix $i\in\{1,\dots,n\}$. The number of $j>i$ such that $j$ occurs
before $i$ in the word $\hy_0$ is $i-1$.
The contribution to these pairs to $\ell(\hy_0)$ is
$
\frac{n(n-1)}2.
$
Similarly, the contribution of the pairs $\bar j>\bar i$ with $\bar i\in\{n+1,\dots,2n\}$ is
$
\frac{n(n-1)}2.
$
Hence 
$
\ell(\hy_0)=n(n-1)=\dim \hGB-\dim(G/B)
$
and
$y_0$ has minimal length.\\

Let $\hT$ be the maximal torus of $\hG$ consisting in diagonal
matrices. Write $X(\hT)=\oplus_{i=1}^{2n}\ZZ
\hat\varepsilon_i/(\hat\varepsilon_1+\cdots+\hat\varepsilon_{2n})$
with the usual notation.
Let $\halpha_i=\hat\varepsilon_i-\hat\varepsilon_{i+1}$ be a simple root of
$\hG$. 
If $i=2k$ is even then $\hy_0 \halpha_i=\hat\varepsilon_{\bar
  k}-\hat\varepsilon_{k+1}\in\hPhi^1$.
If $i=2k+1$ is odd then $\hy_0 \halpha_i=\hat\varepsilon_{
  k+1}-\hat\varepsilon_{\overline{k+1}}\in\hPhi^2$.
By Lemma~\ref{lem:carddiv} 
$$
\Dc_0=\{\halpha_{2k}\,:\, k=1,\dots,n-1\}
$$
works. Moreover 
$$
-\rho(\halpha_{2k})=\varepsilon_k+\varepsilon_{k+1}\qquad \forall k=1,\dots,n-1.
$$

\subsection{$\Spin_{2n-1}$ in $\Spin_{2n}$}

Let $V$ be a $2n$-dimensional vector space endowed with a basis 
$\base=(e_1,\dots,e_{2n})$.
Denote by $(x_1,\dots,x_{2n})$ the dual basis. 
For $i\in [1;2n]$, set $\overline{i}=2n+1-i$.
Let $\hG_0$ be the orthogonal  group associated to the quadratic form 
$$
Q=\sum_{i=1}^nx_ix_{\overline{i}}.
$$ 

Set $\hT_0=\{{\diag}(t_1,\dots,t_n,t_n^{-1},\dots,t_1^{-1})\,:\,t_i\in\CC^*\}$ in
$\hG_0$.
Let $\hB_0$ be the Borel subgroup of $\hG_0$ consisting in upper triangular matrices of $\hG_0$.
Let $\hat\varepsilon_i$ denote the character of $\hT_0$ that maps 
$\diag(t_1,\dots,t_n,t_n^{-1},\dots,t_1^{-1})$ to $t_i$; then 
$X(\hT_0)=\sum_{i=1}^n\ZZ\hat\varepsilon_i$.
Here 
$$
\begin{array}{l}
  \hPhi^+=\{\hat\varepsilon_i\pm\hat\varepsilon_j\,:\,1\leq i<j\leq n\}, {\rm\ and}\\
\widehat\Delta=\{\halpha_1=\hat\varepsilon_1-\hat\varepsilon_2,\,\halpha_2=\hat\varepsilon_2-\hat\varepsilon_3,\dots,\,
\halpha_{n-1}=\hat\varepsilon_{n-1}-\hat\varepsilon_n,\,\halpha_n=\hat\varepsilon_{n-1}+\hat\varepsilon_n\}.
\end{array}
$$

The Weyl group $\hW$ of $\hG_0$ is a subgroup of the Weyl group $S_{2n}$
of $\SL(V)$. More precisely
$$
\hW=\{w\in S_{2n}\,:\,
\left\{\begin{array}{l}
w(\overline{i})=\overline{w(i)} \ \ \forall i\in [1;2n]\\
\sharp w([1;n])\cap[n+1;2n])\mbox{ is even}
\end{array}
\right .
\}.
$$

For $i=1,\dots,n-1$, $s_{\widehat\alpha_i}=(i,i+1)(\overline{i+1}\;\bar
i)$. Moreover $s_{\widehat\alpha_n}=(n-1\,\bar n) (n\, \overline{n-1})$.

\bigskip
Let $\Hc=\scal{e_1,\dots,e_{n-1},\frac{e_n+e_{n+1}}{\sqrt
    2},e_{n+1},\dots,e_{2n}}$ with coordinates $(x_i)_{1\leq i\leq
  n-1}\cup (y)\cup (x_{\bar i})_{1\leq i\leq
  n-1}$. The restriction of $Q$ on $\Hc$ is
$$
Q_{|\Hc}=y^2+\sum_{i=1}^{n-1}x_ix_{\overline{i}}.
$$
The stabilizer of $\Hc$ in $\hG_0$ is $G_0=\SO_{2n-1}(\CC)$.  
Its maximal torus $T_0$ is
$$
\{\diag(t_1,\dots,t_{n-1},1,1,t_{n-1}^{-1},\dots,t_1^{-1})\,:\,t_i\in\CC^*\}.
$$ 
The groups $G$ and $\hG$ are the universal covers of $G_0$ and $\hG_0$ respectively.
Here
$$ 
\begin{array}{l}
  \Phi^+=\{\varepsilon_i\pm\varepsilon_j\,:\,1\leq i<j\leq n-1\}\cup 
\{\varepsilon_i\,:\,1\leq i\leq n-1\}, {\rm\ and}\\
\Delta=\{\alpha_1=\varepsilon_1-\varepsilon_2,\,\alpha_2=\varepsilon_2-\varepsilon_3,\dots,\,
\alpha_{n-2}=\varepsilon_{n-2}-\varepsilon_{n-1},\,\alpha_{n-1}=\varepsilon_{n-1}\}.
\end{array}
$$

Note that $\Delta^2=\{\alpha_{n-1}\}$ and there is only one $G_0$-stable
divisor in $\hG/\hB$.
We have
$$
\dim\hGB=n(n-1)\qquad \dim(G/B)=(n-1)^2.
$$

Set
$$
\hy_0=s_{\widehat\alpha_{n-1}}\dots s_{\widehat\alpha_{2}}
s_{\widehat\alpha_{1}}=(1\,n\,n-1\,\dots 2) (\bar 1\,\bar n\,\dots \bar 2).
$$
We have $\hy_0\widehat\epsilon_i=\widehat\epsilon_{i-1}$ for $i=2,\dots,n$ and
$\hy_0\widehat\epsilon_1=\widehat\epsilon_n$.
In particular 
$$
\hy_0\hPhi^+=\{\widehat\epsilon_n\pm \widehat\epsilon_j\,:\,
j=1,\dots,n-1\}\cup \{\widehat\epsilon_i\pm \widehat\epsilon_j\,:\, 1\leq i<j=n-1\}.
$$
One easily deduce that  the stabilizer of $\hy_0$ in $G_0$ is $H_0$.

The only descent is $\widehat\alpha_{1}$. Then
$\Dc_0=\Dc=\{\widehat\alpha_1\}$
and $-\rho(\hy_0\widehat\alpha_1)=-\rho(\hat\varepsilon_n-\hat\varepsilon_1)=\varepsilon_1$.

\subsection{$G_2$ in $\Spin_7$}

Here $G$ is the group of type $G_2$ embedded in $\Spin_7(\CC)=\hG$
using the first fundamental representation which is 7-dimensional. 
We label the simple roots as follows:

\begin{center}
  \begin{tikzpicture}[scale=0.5]
    \draw (-1,0) node[anchor=east] {$B_{3}$}; \draw (0 cm,0) -- (2
    cm,0); \draw (2 cm, 0.1 cm) -- +(2 cm,0); \draw (2 cm, -0.1 cm) --
    +(2 cm,0); \draw[shift={(3.2, 0)}, rotate=0] (135 : 0.45cm) --
    (0,0) -- (-135 : 0.45cm); \draw[fill=white] (0 cm, 0 cm) circle
    (.25cm) node[below=4pt]{$1$}; \draw[fill=white] (2 cm, 0 cm)
    circle (.25cm) node[below=4pt]{$2$}; \draw[fill=white] (4 cm, 0
    cm) circle (.25cm) node[below=4pt]{$3$};
  \end{tikzpicture}\hspace{2cm}
  \begin{tikzpicture}[scale=0.5]
    \draw (-1,0) node[anchor=east] {$G_2$}; \draw (0,0) -- (2 cm,0);
    \draw (0, 0.15 cm) -- +(2 cm,0); \draw (0, -0.15 cm) -- +(2 cm,0);
    \draw[shift={(0.8, 0)}, rotate=180] (135 : 0.45cm) -- (0,0) --
    (-135 : 0.45cm); \draw[fill=white] (0 cm, 0 cm) circle (.25cm)
    node[below=4pt]{$1$}; \draw[fill=white] (2 cm, 0 cm) circle
    (.25cm) node[below=4pt]{$2$};
  \end{tikzpicture}
\end{center}

The map $\rho$ is characterized by 
$$
\rho(\halpha_2)=\alpha_2\quad
\rho(\halpha_1)=\rho(\halpha_3)=\alpha_1,
$$
and satisfies
$$
\begin{array}{c}
\rho(\halpha_1+\halpha_2)=\rho(\halpha_2+\halpha_3)=\alpha_1+\alpha_2\\[0.8em]
\rho(\halpha_2+2\halpha_3)=\rho(\halpha_1+\halpha_2+\halpha_3)=2\alpha_1+\alpha_2
\end{array}
$$
In particular
$$
\widehat\Phi^+\cap\hPhi^1=\{\halpha_2, \halpha_1+\halpha_2+2\halpha_3, \halpha_1+2\halpha_2+2\halpha_3\}.
$$

The working $\hy_0$ are
$$
\hy_0=\hs_1\, \hs_2\, \hs_3\quad{\rm and}\quad \hy_0=\hs_3\, \hs_2\, \hs_3
$$
Indeed, one can check that $\ell(\hy_0)=3$ and $\dim(G\cap
\hy_0\hB\hy_0\inv)=3$.

Moreover, the only simple roots $\widehat\alpha$ such that $\dim(G\cap
\hw \hB\hw\inv)=4$ where $\hw=\hy_0s_\halpha$ is $\halpha_3$. 
The so obtained divisor is the only $G$-stable divisor accordingly to $\sharp\Delta^2=1$.

There are $\frac{\sharp\hW}{\sharp W}=4$ $G$-orbits in $\hGB$.
The closed orbit has dimension 6 and the open one 9: hence there is
one orbit in each dimension from 6 to 9. Computing the dimensions of
the stabilizers we get the graph $\Gamma(\hG/G)$:

\begin{center}
    \begin{tikzpicture}[scale=1,el/.style = {inner sep=2pt, align=left, sloped}]
      \node[dnode,fill=black] (a) [label=right:{$6$}] at (0,0) {}; 
      \node[dnode,fill=black] (b) [label=right:{$7$}] at (0,1) {}; 
      \node[dnode,fill=black] (c) [label=right:{$8$}] at (0,2) {}; 
      \node[dnode,fill=black] (d) [label=right:{$9$}] at (0,3) {};
       \draw (a) -- node[left] {$\halpha_1$}node[right] {$\halpha_3$} (b);
\draw (b) -- node[left] {$\halpha_2$} (c);
\draw (c) -- node[left] {$\halpha_3$} (d);
    \end{tikzpicture}
  \end{center}

Here
$$\Dc_0=\Dc=\{\widehat\alpha_3\}
\quad{\rm and}\quad
 -\rho(\hy_0\widehat\alpha_3)=\alpha_1+\alpha_2.
$$

\subsection{$F_4$ in $E_6$}

Here $G$ is the group of type $F_4$ embedded in $\hG$ of type $E_6$.
We label the simple roots as follows:

\begin{center}
  \begin{tikzpicture}[scale=0.5]
    \draw (-1,0) node[anchor=east] {$E_6$}; \draw (0 cm,0) -- (8
    cm,0); \draw (4 cm, 0 cm) -- +(0,2 cm); \draw[fill=white] (0 cm, 0
    cm) circle (.25cm) node[below=4pt]{$1$}; \draw[fill=white] (2 cm,
    0 cm) circle (.25cm) node[below=4pt]{$3$}; \draw[fill=white] (4
    cm, 0 cm) circle (.25cm) node[below=4pt]{$4$}; \draw[fill=white]
    (6 cm, 0 cm) circle (.25cm) node[below=4pt]{$5$};
    \draw[fill=white] (8 cm, 0 cm) circle (.25cm)
    node[below=4pt]{$6$}; \draw[fill=white] (4 cm, 2 cm) circle
    (.25cm) node[right=3pt]{$2$};
  \end{tikzpicture}
\hspace{2cm}
\begin{tikzpicture}[scale=0.5]
  \draw (-1,0) node[anchor=east] {$F_4$}; \draw (0 cm,0) -- (2 cm,0);
  \draw (2 cm, 0.1 cm) -- +(2 cm,0); \draw (2 cm, -0.1 cm) -- +(2
  cm,0); \draw (4.0 cm,0) -- +(2 cm,0); \draw[shift={(3.2, 0)},
  rotate=0] (135 : 0.45cm) -- (0,0) -- (-135 : 0.45cm);
  \draw[fill=white] (0 cm, 0 cm) circle (.25cm) node[below=4pt]{$1$};
  \draw[fill=white] (2 cm, 0 cm) circle (.25cm) node[below=4pt]{$2$};
  \draw[fill=white] (4 cm, 0 cm) circle (.25cm) node[below=4pt]{$3$};
  \draw[fill=white] (6 cm, 0 cm) circle (.25cm) node[below=4pt]{$4$};
\end{tikzpicture}
\end{center}
The root system $E_6$ lies in $\RR^8$ with basis $(\widehat\epsilon_i)_{1\leq
  i\leq 8}$. More precisely it spans  the  space $x_6=x_7=-x_8$. See \cite{Bou}.
The roots are 
$$\pm \widehat\epsilon_i\pm\widehat\epsilon_j \quad 1\leq i<j\leq 5
\quad{\rm and}\quad
\pm\frac 1 2 (\widehat\epsilon_8-\widehat\epsilon_7-\widehat\epsilon_6+\sum_{i=1}^5
(-1)^{\nu(i)}\widehat\epsilon_i\qquad\sum\nu(i) {\rm\ even.}
$$
Set
$\tilde\epsilon_6=\widehat\epsilon_8-\widehat\epsilon_6-\widehat\epsilon_7$.
Then,
the simple roots are
$$
\halpha_1=\frac 1 2 \bigg
(\widehat\epsilon_1+\widehat\epsilon_8-(\widehat\epsilon_2+\widehat\epsilon_3+\widehat\epsilon_4+\widehat\epsilon_5+\widehat\epsilon_6+\widehat\epsilon_7)\bigg) =\frac 1 2 \bigg
(\widehat\epsilon_1+\tilde\epsilon_8-(\widehat\epsilon_2+\widehat\epsilon_3+\widehat\epsilon_4+\widehat\epsilon_5)\bigg)
$$
and 
$$
\halpha_2=\widehat\epsilon_1+\widehat\epsilon_2\quad
\halpha_3=\widehat\epsilon_2-\widehat\epsilon_1\quad
\halpha_4=\widehat\epsilon_3-\widehat\epsilon_2\quad
\halpha_5=\widehat\epsilon_4-\widehat\epsilon_3\quad
\halpha_6=\widehat\epsilon_5-\widehat\epsilon_4.
$$

\bigskip
{\bf Root system $F_4$.}
The roots are 
$$\frac 1 2 (\pm\epsilon_1\pm \epsilon_2\pm \epsilon_3\pm \epsilon_4)
\quad{\rm and}\quad
\pm\epsilon_i\qquad
\pm \epsilon_i\pm\epsilon_j \quad 1\leq i<j\leq 4.
$$
The simple roots are
$$\alpha_1=\epsilon_2-\epsilon_3\quad
\alpha_2=\epsilon_3-\epsilon_4\quad
\alpha_3=\epsilon_4\quad
\alpha_4=\frac 1 2 (\epsilon_1- \epsilon_2- \epsilon_3-
\epsilon_4).
$$
The map $\rho$ is characterized by 
$$
\rho(\halpha_2)=\alpha_1\quad
\rho(\halpha_4)=\alpha_2\quad
\rho(\halpha_3)=\rho(\halpha_5)=\alpha_3\quad
\rho(\halpha_1)=\rho(\halpha_6)=\alpha_4.
$$
It is the quotient by 
$$
\begin{array}{c}
\epsilon_2+\epsilon_3-\epsilon_1-\epsilon_4\qquad
\frac 1 2 \bigg
(\epsilon_1+\tilde\epsilon_6 +\epsilon_4-(\epsilon_2+\epsilon_3+3\epsilon_5)\bigg)
\end{array}
$$
Moreover
$$
\sharp \hPhi^+=36\quad
\sharp \widehat W = 51\,840\quad \sharp\Delta^2=2
\quad
\sharp W= 1\,152\quad \sharp\Phi^+=24\quad
\sharp\hW/W=45.
$$
We have 12 short positive roots in $F_4$:
$$
\epsilon_i \qquad \frac 1 2 (\epsilon_1\pm \epsilon_2\pm \epsilon_3\pm \epsilon_4)
$$

A working $\hy_0$ is
$$
\hy_0=\hs_1\, \hs_5\, \hs_3\, \hs_4\, \hs_2\, \hs_3\, \hs_4\, \hs_5\, \hs_4\, \hs_3\, \hs_1\, \hs_6.
$$
Indeed, one can check that $\ell(\hy_0)=12$ and $\dim(G\cap
\hy_0\hB\hy_0\inv)=16$.

Moreover, the only simple roots $\widehat\alpha$ such that $\dim(G\cap
\hw \hB\hw\inv)=17$ where $\hw=\hy_0s_\halpha$ are $\halpha_1$ and
$\halpha_6$. 
The so obtained divisors are distinct since $\sharp\Delta^2=2$. Set
$$
\Dc_0=\{\widehat\alpha_1,\,\widehat\alpha_6\}.
$$
One checks that
$$
\begin{array}{lcl}
  -\hy_0\halpha_1=\halpha_1+\halpha_2+2\halpha_3+2\halpha_4+\halpha_5
&\stackrel{\rho}{\longmapsto}&
\alpha_1+\alpha_2+3\alpha_3+\alpha_4;\\
-\hy_0\halpha_6=\halpha_1+\halpha_2+\halpha_3+2\halpha_4+2\halpha_5+\halpha_6
&\stackrel{\rho}{\longmapsto}&
\alpha_1+2\alpha_2+3\alpha_3+2\alpha_4.
\end{array}
$$

\bibliographystyle{alpha}

\bibliography{multspace}

\begin{center}
  -\hspace{1em}$\diamondsuit$\hspace{1em}-
\end{center}

\end{document}